
\documentclass[12pt
]{amsart}

\usepackage[margin=1in]{geometry}

\usepackage{amscd}
\usepackage{amsmath}
\usepackage{graphicx}
\usepackage{wasysym}
\usepackage{enumerate}
\usepackage{ifpdf}
\usepackage{amsfonts}
\usepackage{amsthm}
\usepackage[hyperfootnotes=false]{hyperref}
\hypersetup{hidelinks}
\usepackage{setspace}
\usepackage[usenames]{xcolor}
\usepackage{tikz}

\usepackage{amsrefs}

\usepackage{amsfonts}
\DeclareMathSymbol{\blacktriangle}{\mathord}{AMSa}{"4E}
\DeclareMathSymbol{\nsubseteq}{\mathrel}{AMSb}{"2A}
\DeclareMathSymbol{\lesssim}{\mathord}{AMSa}{"2E}
\DeclareMathSymbol{\gtrsim}{\mathord}{AMSa}{"26}

\title[]{Smooth equivalence of deformations of domains in complex euclidean spaces}

\author[]{Herv\'{e} Gaussier$^\dag$ \and Xianghong Gong$^{\dag\dag}$}
\address{H. Gaussier:
Univ. Grenoble Alpes, CNRS, IF, F-38000 Grenoble, France
}
\email{herve.gaussier@univ-grenoble-alpes.fr}
\address{X. Gong:
Department of Mathematics,
University of Wisconsin-Madison, Madison, WI 53706, U.S.A.}
\email{gong@math.wisc.edu}

 \keywords{Strictly pseudoconvex domains, moduli space of multi-connected planar domains, automorphisms of domains, smooth deformation of domains}
 \subjclass[2010]{32T15, 30C20, 32H40}
  \thanks{$^\dag\,$Partially supported by ERC ALKAGE. $^{\dag\dag}$Partially supported by a grant from the Simons
Foundation (award number: 505027)}

\oddsidemargin=\evensidemargin

\newcommand{\dist}{\operatorname{dist}}
\newcommand{\DD}[2]{\frac{\partial #1}{\partial #2}}

\newtheorem{thm}{Theorem}[section]
\newtheorem{cor}[thm]{Corollary}
\newtheorem{prop}[thm]{Proposition}
\newtheorem{lemma}[thm]{Lemma}

\theoremstyle{definition}

\renewcommand{\th}[1]{\begin{thm}\label{#1}}
\newcommand{\eth}{\end{thm}}
\newcommand{\co}[1]{\begin{cor}\label{#1}}
\newcommand{\eco}{\end{cor}}
\renewcommand{\le}[1]{\begin{lemma}\label{#1}}
\newcommand{\ele}{\end{lemma}}
\newcommand{\pr}[1]{\begin{prop}\label{#1}}
\newcommand{\epr}{\end{prop}}

\newcommand{\ga}{\begin{gather}}
\newcommand{\ega}{\end{gather}}
\newcommand{\gan}{\begin{gather*}}
\newcommand{\egan}{\end{gather*}}
\newcommand{\al}{\begin{align}}
\newcommand{\eal}{\end{align}}
\newcommand{\aln}{\begin{align*}}
\newcommand{\ealn}{\end{align*}}
\newcommand{\eq}[1]{\begin{equation}\label{#1}}
\newcommand{\eeq}{\end{equation}}

\newcommand{\ci}{~\cite}

\newcommand{\f}[2]{\frac{#1}{#2}}

\newcommand{\D}{\mathbb{D}}
\newcommand{\cc}{{\bf C}}

\newcommand{\rr}{{\bf R}}

\newcommand{\hcc}{\hat{\bf C}}

\newcommand{\ov}{\overline}

\newcommand{\id}{\operatorname{Id}}
\newcommand{\RE}{\operatorname{Re}}
\newcommand{\IM}{\operatorname{Im}}


\newcommand{\cL}{\mathcal}

\newcommand{\all}{\alpha}

\newcommand{\gaa}{\gamma}
\newcommand{\Gaa}{\Gamma}
\newcommand{\del}{\delta}

\newcommand{\var}{\varphi}
\newcommand{\e}{\epsilon}
\newcommand{\om}{\omega}
\newcommand{\Om}{\Omega}

\newcommand{\la}{\lambda}

\newcommand{\pd}{\partial}

\newcommand{\re}[1]{(\ref{#1})}
\newcommand{\rea}[1]{$(\ref{#1})$}
\newcommand{\rl}[1]{Lemma~\ref{#1}}
\newcommand{\nrc}[1]{Corollary~\ref{#1}}
\newcommand{\rp}[1]{Proposition~\ref{#1}}
\newcommand{\rt}[1]{Theorem~\ref{#1}}

\newcommand{\rpa}[1]{Proposition~$\ref{#1}$}
\newcommand{\rta}[1]{Theorem~$\ref{#1}$}

\newcounter{pp}
\newcommand{\bpp}{\begin{list}{$\hspace{-1em}\alph{pp})$}{\usecounter{pp}}}
\newcommand{\epp}{\end{list}}

\newcounter{ppp}
\newcommand{\bppp}{\begin{list}{$\hspace{-1em}(\roman{ppp})$}{\usecounter{ppp}}}
\newcommand{\eppp}{\end{list}}

\begin{document}

\begin{abstract}
We prove that two smooth families of 2-connected domains in $\cc$ are smoothly equivalent if they are equivalent under a possibly discontinuous family of biholomorphisms. We construct, for $m \geq 3$,  two  smooth families of smoothly bounded $m$-connected domains in $\cc$, and for $n\geq2$,  two families of strictly pseudoconvex domains in $\cc^n$,   that are  equivalent under    discontinuous families of biholomorphisms but   not under any continuous family of biholomorphisms.  Finally, we give sufficient conditions for  the smooth equivalence of two smooth families of domains.
 \end{abstract}

 \maketitle


\setcounter{section}{0}
\setcounter{thm}{0}\setcounter{equation}{0}

\section{Introduction}\label{sec1}

  The main purpose of this paper is to study smoothness properties of holomorphic equivalence for  deformations of domains in $\cc^n$.
The deformation theory  was developed by Kodaira-Spencer~\ci{KS58} for
 compact complex manifolds.  In the local theory,
the study of a family of complex structures has also occurred in the work of Newlander-Nirenberg~\cite{NN57}, Nirenberg~\ci{Ni57}, and Nijenhuis-Woolf~\ci{NW63}.  Families of complex structures were further studied recently by Bertrand-Gong-Rosay~\cite{BGR14}, and the second author~\ci{Go17}.

We consider a family  $\{D^\la;0\leq \la\leq1\}$ of domains $D^\la$ in $\cc^n$. Thus  the total space $\cL D=\cup_{ t\in[0,1]}D^\la\times\{\la\}$ is a subset of $ \cc^n\times[0,1]$ and we will also denote $\{D^\la\}$ by $\cL D$.
We  will study the equivalence of $\cL D$ under a family $F:=\{F^\la\}$ of biholomorphic mappings $F^\la$ from $D^\la$ onto suitable domains in $\cc^n$ and we are interested in the regularity of the total map $F\colon\cL D\to \cc^n\times[0,1]$ defined by $F(z,\la)=F^\la(z)$.    The Riemann mapping theorem says that a simply connected domain in the complex plane is holomorphically equivalent to the unit disc when $D$ is not the whole plane. For domains in $\cc^n$ with $n>1$, there is no Riemann mapping theorem in the sense that topology of the domains plays no role for the holomorphic classification of the domains and a relevant question is if a biholomorphic mapping between two smoothly bounded domains extends smoothly up to the boundary. Fefferman's theorem~\ci{Fe74} says that such a mapping indeed extends smoothly when the domains are strictly pseudoconvex. For general domains,
the boundary extension of proper holomorphic mappings has been studied intensively and many positive results have been obtained; see for instance the survey article by Forstneri\v{c}~\cite{Fors93} and the reference in Diederich-Pinchuk~\cite{Di-Pi95} for biholomorphic extension between real-analytic non-pseudoconvex domains in complex dimension 2. The reader can also refer to a survey by Coupet-Gaussier-Sukhov~\ci{CGS08} on the Fefferman type of extension theorem for domains in  almost complex manifolds.

For the classification of families of domains under families of biholomorphic mappings, there are only very few results.    Courant proved a version
of Carath\'eodory's Riemann mapping theorem
for a sequence of Jordan domains, showing the continuous dependence of the Riemann mappings on the domains (see Tsuji~\ci{Ts59}, p.~383).  Recently Bertrand-Gong~\ci{BG14} proved that a smooth family of smoothly bounded simply connected domains in the complex plane is equivalent to the unit disc under a smooth family of biholomorphic mappings. In this paper, we will prove the following:
\pr{annulus}
Let $\cL D$ be a smooth family of smoothly bounded $2$-connected   domains $D^\la$ in $\cc$. Then there exists a smooth family
$ K
$
of diffeomorphisms $K^{\la}$
from $\ov{D^{\la}}$ onto $\ov{A^{\la}}$ with
$$
A^{\la}:=\{z \in \cc \colon \mu(\la) < |z| < 1\}
$$
such that each $K^{\la}$ is holomorphic on $D^{\la}$.
\epr
Although the smoothness of equivalence is sometimes
 considered as granted in deformation theory, the following question was raised in Bertrand-Gong~\cite{BG14}: Let $\cL D, \widetilde{\cL D}$ be two smooth families of smoothly bounded domains in $\cc^n$. Suppose that for each $\la$ there is a biholomorphic mapping $F^\la$ sending $D^\la$  onto $\tilde D^\la$. Does there exist a smooth family of biholomorphic mappings $G^\la$ from $D^\la$ onto $\tilde D^\la$ such that the total map $G\colon\ov{\cL D}\to\ov{\widetilde{\cL  D}}$ is smooth? One of the main results of this paper is the following:
\th{negative} Let $n$ be a positive integer.
There exist two smooth families $\cL D
$, $ \widetilde{\cL D}
$ of smoothly bounded domains in $\cc^n$
such that each $D^{\la}$ is biholomorphic to $\tilde D^{\la}$, while there is no continuous family $F
$
of biholomorphic mappings $F^\la$ from $D^{\la}$ onto $\tilde D^{\la}$.
\eth

Therefore, to seek a classification of a smooth family of domains under a smooth family of biholomorphisms, we need to impose additional conditions on the domains. In this paper, we consider a family of {\it rigid} domains $D^\la$. Here by a rigid domain we mean that its holomorphic automorphism group   consists of the identity map only. By a theorem of Greene-Krantz~\ci{GK84}, rigid strictly pseudoconvex domains form a dense and open set in the space of   bounded domains of $C^2$ boundary.
 It turns out  that  the   non-rigidity plays a role in the proof of \rt{negative}.
It remains open if the above-mentioned question has a positive answer when all domains are rigid.  As a first step,  we will prove the following positive result for rigid domains:
\th{boundary-}Let $\cL D$, $\widetilde{\cL D}$ be two continuous families of bounded strictly pseudoconvex domains with $C^2$ boundary. Suppose that for each $\la$, $D^\la$ is rigid and  $F^\la$ is a biholomorphic mapping from $D^\la$ to $\tilde D^\la$. Then $F$ is a continuous family of homeomorphisms $F^\la$ from $\ov {D^\la}$ onto $\ov{\tilde D^\la}$. Furthermore, the H\"{o}lder-$\f{1}{2}$ norms of $F^\la$ on $\ov{D^\la}$ have an upper bound independent of $\la$.
\eth
When $n=1$, a higher regularity result is given by \nrc{preg}. See also Courant~\cite{Co50}*{p.191}
 for related results when $D^\la$ is independent of $\la$ and $D^\la,\tilde D^\la$ are bounded by finitely many Jordan curves.  For a single biholomorphic mapping between two strictly pseudoconvex domains, which are not necessarily rigid, the result is due to Margulis~\ci{Ma71}, Vormoor~\ci{Vo73},  and  Henkin~\ci{He73}.

The paper is organized as follows. In section~\ref{sec2}, we find a normal form for a family of multi-connected domains in the complex plane. The normal form includes \rp{annulus} for the two-connected domains. Using the normal form, we also provide a complete solution with necessary and sufficient conditions ensuring the smooth dependence of biholomorphisms in a parameter, for families of rigid domains in the complex plane. In section~\ref{sec3} we construct examples of smooth families of domains in $\cc^n$,
for any $n\geq1$, that are equivalent under a discontinuous family of biholomorphisms, while there are not equivalent under any {\it continuous} family of biholomorphisms. This is the content of \rt{negative}. A common feature of the examples is that each of the smooth families of domains consists of only rigid domains except one. The solution to the one-dimensional problem and the examples   constructed in Theorem~\ref{negative} lead us to seeking sufficient conditions in higher dimensions to ensure positive results, and \rt{boundary-}, proved in section~\ref{sec4},  is our main positive result in the higher dimension.

The main consequence of \rt{boundary-} is a localization for the study of the global regularity of the family of holomorphic mappings between two families of rigid domains to a local study by employing the existing tools in several complex variables. As an application, we obtain $C^{\infty,0}$ regularity of $\{F^\la\}$ as well as $C^{\infty,\infty}$ and $C^{\om,\om}$ regularities of $\{F^\la\}$ under additional assumptions on the interior regularity  $\{F^\la\}$ with respect to all variables by employing a simplified proof by Lempert~\ci{Le81}
for Fefferman's theorem and the Lewy-Pinchuk theorem (see Corollaries~\ref{cor47} and \ref{cor48}).

\setcounter{thm}{0}\setcounter{equation}{0}

\section{One complex variable positive results}\label{sec2}

Let us first  define spaces on domains with parameter.
Let $D$ be a bounded domain in $\rr^N$. Let $0\leq\all<1$ and let $k,j$ be integers with $k\geq j\geq0$. We say that a family $u=\{u^\la\}$ of functions $u^\la$ is
in $ C^{k+\all,j}(\ov D)$ if
for each $\ell+i\leq k$ and $i\leq j$, the partial derivative $\pd_x^\ell\pd_t^iu$ is continuous and the H\"older-$\all$ ratios in the variables $x\in\ov D$ are uniformly bounded as $\la$ varies in $[0,1]$. By a family $\cL D=\{D^\la\}$ of  domains $D^\la$ of class $C^{k+\all,j}(\ov D)$ we mean that there are embeddings $\Gaa^\la$ from $\ov D$ onto $\ov{D^\la}$ so that each component of $\Gaa:=\{\Gaa^\la\}$ is a family of functions in $C^{k+\all,j}(\ov D)$ and $D$ is a bounded domain of $ C^{k+\all}$ boundary, with $k\geq1$; we call $\Gaa$ a {\it parameterization} of $\cL D$ of class $C^{k+\all,j}$. Define $\pd \cL D=\cup_\la\pd D^\la\times\{\la\}$.  When $D$ is a family of domains $\{D^\la\}$ given by embeddings $\Gaa^\la$ from $\ov D$ onto $\ov{D^\la}$  and when $\pd D\in C^{k+\all}$ and $k\geq1$,
we say that a family $\{v^\la\}$ is in $ C^{\ell+\all,j}(\pd {\cL D})$ if $v^\la$ are functions on $\pd D^\la$ and $\{v^\la\circ\Gaa^\la\}$ is of class $C^{\ell+\beta,j}(\pd D)$ for $\ell+\beta\leq k+\all$ and $\ell\geq j$. Analogously, one can defined a real analytic family $\cL D$ of bounded domains $D^\la$ with real analytic boundary, and a real analytic family $u$ of functions $u^\la$ on $\ov{D^\la}$.  Thus, we can consider spaces $C^{\om,\om}(\ov D)$ and $C^{\om,\om}(\ov{\cL D})$.
  See~\ci{BG14} for basic properties of the spaces $C^{k+\all,j}(\ov{\cL D})$. Note that the functions spaces are independent of the parameterization $\Gaa$.

We will need the following  result from Bertrand-Gong~\ci{BG14}.

\pr{dpp} Let  $j,k$ be non negative integers or $\infty$  satisfying
 $k\geq j$. Let   $0<\all<1$. Let $D$ be  a bounded domain in $\rr^2$ with $C^{k+1+\all}$ $($resp.~$C^\om)$ boundary.
Let $\cL D$ be a family of   bounded domains $\cL D^\la$ in $\rr^2$ with $\ov{\cL D} \in C^{k+1+\all,j}(\ov D)$ $($resp.
  $C^{\om,\om}(\ov D))$.
Let $u\in C^{0}(\pd \cL D)$
 be  harmonic functions $u^\la$ on ${ D^\la}$
with $\{u^\la|_{\pd D^\la}\}\in C^{k+1+\all,j}(\pd\cL D)$ $($resp.~$C^{\om,\om}(\pd\cL D))$.  Then $u\in C^{k+1+\all,j}(\ov{\cL D})$ $($resp.~$C^{\om,\om}(\ov{\cL D}))$.
\epr
We remark that there is a difference in notation. The space $C^{k+1+\all,j}(\cdot)$ is the space $\cL B(\cdot)$ in~\ci{BG14}. Although some results in this paper can be formulated for the space $\cL C^{k+1+\all,j}(\cdot)$ in~\ci{BG14}, we will not discuss them in this paper.

Before we deal with multi-connected domains, we recall here
a positive result on simply connected domains   proved in \cite{BG14}.
\pr{dpp+} Let  $j,k,\all, D, 
\cL D$ be as in \rpa{dpp}.
Suppose that $\cL D\in C^{k+1+\all,j} ($resp. $C^{\om,\om})$ and $D$  is simply connected. Then there exists a family $R$ of Riemann mappings
  $R^\la$ from $D^\la$ onto the unit disc so that  $R$ is in $C^{k+1+\all,j}(\ov{\cL D})$ $($resp.~$C^{\om,\om}(\ov{\cL D}))$.
\epr

We consider now the  $m$-connected case, with $m \geq 2$. Let us first introduce the following notation.
Let $\D_{r}(c)$ be the disc of radius $r$ centered at $c$, with $\D_{r}=\D_r(0)$.
  For $0 < r < 1$, let $A_r$ be the annulus $\D_1\setminus\ov{\D_r}$.
For $m \geq 3$ and $j=3,\dots,m$, let $\alpha_j < \beta_j < \alpha_j + 2\pi$.
Let
$$A^*_{m,r}(\all,\beta):=A^*_{m,(r_2,\dots,r_m)}((\all_3,\dots,\all_m),(\beta_3,\dots,\beta_m))
$$
be the $m$-connected domain obtained by removing
from $A(r_2)$ with $0<r_2<1$, the $(m-2)$ disjoint
arcs
$$\operatorname{Arc} (r_j,[\all_j,\beta_j]):=\{r_je^{i\theta}\colon\all_j\leq\theta\leq\beta_j\},\ 3 \leq j \leq m.
$$
 We call $A^*_{m,r}(\all,\beta)$ a {\it slit annulus}.

\vspace{2mm}
Given a family of embeddings $\Gaa^{\la}\colon\ov D\to\ov{D^{\la}}$ with
$\Gaa\in C^{1,0}(\ov D)$ and $\pd D\in C^1$, we will label the boundary of $D$
as union of disjoint closed curves $\gaa_1,\ldots, \gaa_m$. We set $\gaa_i^{\la}=\Gaa^{\la}(\gaa_i)$.
We might  adjust the order and the orientations
 of $\gaa_1,\dots, \gaa_m$ when necessary.

We first prove the following Koebe norm form for a family of $m$-connected domains, for $m \geq 2$.

\th{1dmarked} Let $j,k,\all, D,\cL D$ be as in \rpa{dpp}.
Suppose that $D$ is  $m$-connected and $\cL D\in C^{k+1+\all,j} ($resp.~$C^{\om,\om})$. 
 Let $\la \mapsto (a_1({\la}),a_2({\la}))\in\pd D^{\la}\times \pd D^{\la}$  be  a $C^j$ $($resp.~$C^\om)$ mapping
 such that $a_1({\la}),a_2({\la})$ are not in the same component of $\pd D^{\la}$.
 The following hold~$:$
   \bppp
\item  There exists
  a unique family $K$ of canonical conformal mappings $K^{\la}$ from $D^{\la}$ onto the slit annulus
  $A^*_{m,r^{\la}}(\all^\la,\beta^\la)
  $
  such that $K^{\la}(a_1({\la}))=1$, $|K^{\la}(a_2({\la}))|=r_2^{\la}$.
Moreover,   $K\in C^{k+1+\all,j}(\ov{\cL D})$ $($resp.~$ C^{\om,\om}(\ov{\cL D}))$.
  \item Suppose that
  $$
  \tilde K^{\la}\colon D^{\la}\to   A^*_{m,\tilde r^\la}(\tilde\all^\la,\tilde\beta^\la)
  $$
  satisfies $\tilde K^{\la}(\gaa_1^{\la}) = \pd\D_1$ and $\tilde K^{\la}(\gaa_2^{\la}) = \pd \D_{\tilde r^{\la}_2}$.
Then $\tilde K^{\la}=\mu^{\la} K^{\la}$ with $|\mu^{\la}|=1$.
   \item
    The mapping   $\la \mapsto r_2^\la$ and, if $m \geq 3$, the mapping
 $$
 \la \mapsto ( r_3^\la,
 \all_3^{\la},\beta_3^{\la},\dots,r_m^\la, \all_{m}^{\la},\beta_{m}^{\la})
$$
are of class $C^j([0,1])$
  $($resp.~$C^{\om}([0,1]))$.
  \item   Let $m \geq 3$. For each $\la$ and $\ell=3,\dots, m$, $a_\ell^\la:=(K^\la)^{-1}(r_i^\la e^{i\all_\ell^\la})$ and $b_\ell^\la:=(K^\la)^{-1}(r_i^\la e^{i\beta_\ell^\la})$ are points in $\gaa_\ell^\la$. Moreover, the map $\la\mapsto(a^\la,b^\la)$ is  of class  $C^j$ $($resp.~$C^\om)$ in $[0,1]$.
 \eppp
  \eth

  \begin{proof}
 $(i).$ The existence of the canonical mapping $K^{\la}$ for a fixed $\la$, given by $(i)$ is a theorem of Koebe. For its proof, see
   Ahlfors~\ci{Ah78}*{p. 255}.

 We already get
$$
K^{\la}\colon D^{\la}\to A_{m,r^\la}^*(\all^\la,\beta^\la).
$$
We now verify the smoothness with respect to $\lambda$.

We label the component that contains $a_1^{\la}$ as $\gaa_1^{\la}$ and the one that contains $a_2({\la})$ as $\gaa_2^{\la}$. We label the other components as $\gaa_3^{\la},\dots, \gaa_m^{\la}$,
depending continuously on $\la$.

For $i=1,\dots,m$, let $u_i^{\la}$ be the harmonic measure on $\pd D^{\la}$ such that $u_i^{\la}=1$ on $\gaa_i^{\la}$
  and $u_i^{\la}=0$   on $\gaa_\ell^{\la}$, for $\ell\neq i$.  By \rp{dpp}, $\{u_i^{\la}\}$ has the desired regularity.
  Let $\nu^\la$ be the outer unit normal vector of $\pd D^{\la}$. Then it follows from the maximum principle:
  $$
\left\{
\begin{array}{lllll}
\pd_{\nu^\la} u_i^{\la} & > & 0 & \text{on} & \gaa_i^{\la},\\
 & & & \\
\pd_{\nu^\la}u_i^{\la}   & < & 0 & \text{on} & \gaa_\ell^{\la},
\ \ell\neq i.
\end{array}
\right.
$$
Since $\int_{D^{\la}}\partial \overline{\partial}u_i^{\la} = 0$, it follows from the Green formula that $\int_{\pd D^{\la}}\pd_{\nu^\la}u_i^{\la}\, d\sigma=0$ for length element $d\sigma$
  on $\pd D^{\la}$.  Moreover, we have
  \begin{equation}\label{det-eq}
  \det\left(\int_{\gaa_i^{\la}}\pd_{\nu^\la}u_\ell^{\la}\, d\sigma\right)_{2 \leq i,\ell\leq m} \neq 0.
  \end{equation}
Indeed, suppose by contradiction that there exist $\la \in [0,1]$ and $(c_2^{\la},\dots,c_m^{\la}) \neq 0$ such that
  $$
  \sum_{\ell=2}^m c_\ell^{\la}\int_{\gaa_i}\pd_{\nu^\la} u_\ell^{\la}\, d\sigma=0, \quad i \geq 2.
  $$

 Let $\tilde u^{\la}=\sum_{\ell=2}^m c _\ell^{\la}u _\ell^{\la}$.
 Then we have, for every $i \geq 2$:
 $$
\displaystyle \int_{\gaa_i^{\la}} \pd_{ {\nu^\la}}\tilde u^{\la}\, d\sigma = \displaystyle \sum_{\ell=2}^mc_{\ell}^{\la} \int_{\gaa_i^{\la}} \pd_{ {\nu^\la}}u_{\ell}^{\la}d\sigma = 0
 $$
 and, from the Green formula: $\displaystyle \int_{\gaa_1^{\la}} \pd_{ {\nu^\la}}\tilde u^{\la}\, d\sigma = - \displaystyle \sum_{i=2}^m\int_{\gaa_i^{\la}} \pd_{ {\nu^\la}}\tilde u^{\la} \, d\sigma=0$.
 If $\tilde u^{\la}$ is not constant on $D^{\la}$ then, since $\tilde u^{\la}$ is constant on each $\gaa_i^{\la}$, it follows from the Hopf lemma that $\pd_{ {\nu^\la}}\tilde u^{\la}>0$ on  some $\gaa_{i_*}^{\la}$ ($i \geq 1$)
 on which $\tilde u^{\la}$ attains the maximum value. This contradicts the equality
 $\int_{\gaa_{i_*}^{\la}}\pd_{\nu^\la}\tilde u^{\la}\, d\sigma=0$.

 Hence $\tilde u^{\la}$ is constant on $D^{\la}$. Then $\tilde u^{\la}=0$ as it holds on $\gaa_1^{\la}$.
 However for every $i \geq 2$, $\tilde u^{\la}=c_i^{\la}$ on $\gaa_i^{\la}$. Hence, for every $i \geq 2$, $c_i^{\la} = 0$. This is a contradiction.

 It follows now from (\ref{det-eq}) that for every $\la \in [0,1]$ there is a unique   $(m-1)$-tuple $(c_2^{\la},\dots,c_m^{\la}) \neq 0$ such that $u^{\la}:=\sum_{i=2}^mc_i^{\la}u_i^{\la}$ satisfies
 \begin{equation}\label{mod-eq}
 \int_{\gaa_2^{\la}} \pd_{ {\nu^\la}}u^{\la}\, d\sigma = -2\pi, \quad  \int_{\gaa_i^{\la}} \pd_{ {\nu^\la}}u^{\la}\, d\sigma=0,
 \quad 3 \leq i\leq m.
 \end{equation}
 Thus $\displaystyle \int_{\gaa_1^{\la}}\pd_{ {\nu^\la}}u^{\la}\, d\sigma = 2\pi$.
 Since $u^{\la}$ is not identically zero and is constant on each $\gaa_i^{\la}$, then
by the Hopf lemma again $\pd_ {\nu^\la} u^\la>0$ on the component on which
it has maximum value and $\pd_{ {\nu^\la}}u^\la<0$ on the component on which $u^\la$ has  the minimum value. Thus
  $u^{\la}$ does not attain maximum or minimum values on $\gaa_i^{\la}$ for $3 \leq i \leq m$.
 So $u^{\la}$ attains the maximum value on one of the two curves $\gaa_1^{\la},\gaa_2^{\la}$ and the minimum value on the
 other curve.  From $\int_{\gaa_1^{\la}}\pd_{ {\nu^\la}}u^{\la}\, d\sigma = 2\pi$ and $\int_{\gaa_2^{\la}}\pd_{ {\nu^\la}}u^{\la}\, d\sigma = -2\pi$, we know that $u^{\la}$ attains the maximum value on $\gaa_1^{\la}$ and the minimum value on $\gaa_2^\la$.  Furthermore, the maximum value is $0$.

 By \rp{dpp}, as a function in $\la$, each entry in the matrix in \re{det-eq} has the same regularity as of $\pd \cL D$. Hence the unique solution $(c_2^\la,\dots,c_m^\la)$, which is obtained
 from the linear system \re{mod-eq},  also has the same regularity.
  In particular, $u^{\la}$ has the same regularity in $(z,\la)$ as $u^{\la}_2,\dots,u^{\la}_m$, given by Proposition~\ref{dpp}.

 Let $v^{\la}$ be the harmonic conjugate of $u^{\la}$. It follows from (\ref{mod-eq}):
$$
 v^{\la}(z):=\int_{a_1(\la)}^z(-\pd_{y}u^{\la}\, dx+\pd_{x}u^{\la}\, dy), \quad \mod 2\pi.
$$
Assume that $\la\mapsto a_1(\la)$ is in $C^j([0,1])$.  We can verify that
$v^{\la}$ has the same regularity as $u^{\la}$
since $k\geq j$.

Set $K^{\la}(z)=e^{u^{\la}(z)+iv^{\la}(z)}$ for $z \in D^{\la}$.
 By Rouch\'e's theorem, $K^{\la}$ is a biholomorphic mapping from $D^{\la}$ onto
 $A^*_{m,r^\la}(\all^\la,\beta^\la)$.
We have $K^{\la}(a^{\la}_1)=1$ and
$$
r_i^\la=|K^\la(z)|=e^{u^\la(z)}, \quad z\in\gaa_i^\la, \quad 1<i\leq m.
$$
  It follows that the map $\la \mapsto r^{\la}_i$ is in $C^j([0,1])$. It is clear that
  $K^{\la}$ has the same regularity as $u^{\la}$ and $v^{\la}$.

$(ii).$ The assertion is proved in~\ci{Ah78}*{p.~257}.

 $(iii).$
Fix $i$ with $2<i\leq m$ and fix $\la_0$.  We know that $v^\la \mod 2\pi$ is continuous.
 Fix the  range of $v^{\la_0}$ on $\gaa_i^{\la_0}$ in $[\all_i^{\la_0},\beta_i^{\la_0}]$.  We may assume that
 $v^\la$ is continuous and has values in $I:=(-\e+\all_i^{\la_0},\beta_i^{\la_0}+\e)$, when  $\la$ is sufficiently close to $\la_0$.  Here the length of $I$ is less than $2\pi$. Then we may take $\all_i^\la$ to be the minimum value of $v^\la$ and $\beta_i^\la$ its maximum value. We have $v^\la(\Gaa^\la(x_\la))=\all_i^\la$.
 By continuity of $v^\la(\Gaa^\la)$ and compactness of $\gaa_i$,
 we conclude $\liminf_{\la\mapsto\la_0}\all_i^\la\geq \all_i^{\la_0}$.  Suppose that $\all_i^{\la_0}=v^{\la_0}(\Gaa^{\la_0}(x_0))$. Let $\e>0$. By the continuity of $v^\la(\Gaa^\la)$, we also have
  $v^\la(\Gaa^\la(x))<\all_i^{\la_0}+\e$ for $(\la,x)$ sufficiently close to $(\la_0,x_0)$. This shows that
  $\limsup_{\la\mapsto\la_0}\all_i^\la\leq\all_i^{\la_0}$. Therefore, $\all_i^\la \mod (2\pi)$ is continuous. Analogously, $\beta_i^\la \mod (2\pi)$ are also continuous in $\la$.

The above ends the proof for $j=0$.

Suppose now that $j\geq1$.
The latter implies that
 $k+1\geq2$.
We want to show that the mapping $\la\mapsto(\all_j^\la,\beta_j^\la)$ is of class $C^j$.  Recall that the mapping $z\mapsto v^\la(z)$ is of class $C^{k+1+\all,j}$ and
$$
K^\la(z)=r_j^\la e^{iv^\la(z)}, \quad z\in\gaa_j^\la.
$$

\vspace{2mm}
Fix $j$ and $\la$. Let $K^\la(a_j^\la)=r_j^\la e^{i\all_j^\la}$. Let $2<j\leq m$. Let us first consider the case that
$\gaa_j^\la$ is real analytic near $a_j^\la$.
By the Schwarz reflection principle, we know that
 the map
$$
f : D^{\la}\ni z   \mapsto f(z)=\left(\f{K^\la(z)-K^\la(a_j^\la)}{K^\la(z)+K^\la(a_j^\la)}\right)^{1/2}
$$
is biholomorphic near $a_j^\la$.
Thus, we have
$
f'(a_j^\la)\neq0.
$
Assume that $\tilde v(s,\la):=v^\la(\gaa_j^\la(s))$ attains the minimum value at $s=s^\la_*$. Thus its derivative at $s=s^\la_*$ vanishes. Then a straightforward computation  by differentiating $f^2(\gaa_j^\la(s))$ in $s$ twice shows that
\eq{non-zero-der}
\DD{^2}{s^2}  \tilde v(s^\la_*,\la)=-\sqrt{-1}\left(f'(a_j^\la)(\gaa_j^\la)'(s^\la_*)\right)^2\neq0.
\eeq
For the general case of $\gaa_i^\lambda$ being of class $C^{k+1}$ near $a_j^\lambda$, we already know that $K^\la$ is of class $C^{k+1}$ with $k+1\geq2$. We apply Kellogg's Riemann mapping theorem to find a $C^{k+1}$ diffeomorphism  $R$ from the closure of the unit disc $\Delta$ onto $\overline\omega$, where $R$ is holomorphic in $\Delta$,  $\omega$ is an open subset of  $D^\lambda$ such that $\partial\omega$ is $C^{k+1}$. Moreover, $\partial\omega\cap\gamma_j^\lambda$ contains an open curve containing $z_0^\lambda$.  We choose $\omega$ so small that $f^\lambda\circ R$ is well-defined. Applying the Schwarz reflection principle to $f\circ R$ and repeating the above computation to $f\circ R$, we conclude that $f'(z_0^\la)\neq0$.
We now solve for $s=g(\la)$ from
\eq{xvj}
\DD{}{s}\tilde v(s,\la)=0
\eeq
for $(s,\la)$ close to $(s_*^{\la_0},\la_0)$. Note that $(s,\la)\mapsto \DD{}{s}\tilde v(s,\la)$ is of class $C^{k+\all,j}$.
By \re{non-zero-der}, $k\geq j$,   $k\geq1$, and the continuity of $\all_j^\la$ and the implicit function theorem, we conclude that the solution $g$ to \re{xvj} is of class $C^j$.
This shows that $\la\mapsto \all_j^\la$ is of class $C^j$. The proof for $C^\om$ case is by analogy.

$(iv).$ It follows directly from the above proof.
 \end{proof}

\vspace{2mm}
We point out that, as one will see from the counter-examples constructed in section~\ref{sec3}, additional properties are needed to prove some regularity results on families of biholomorphisms.
As an application of Theorem~\ref{1dmarked}, we obtain the following result giving global regularity of a family of biholomorphisms.

\begin{cor}\label{preg}Let  $j,k,\all, D,\cL D$ be as in \rpa{dpp}.
Let $\widetilde{\cL D}$ be another family of domains.  Suppose that  $\ov{\cL D},\ov{\widetilde{\cL D}}$ are in
$C^{k+1+\all,j}(\ov D)$   $($resp.~$C^{\om,\om}(\ov D))$.
Let $D$ be  $m$-connected with $m \geq 3$, where $\pd D\in C^{k+1+\all}$ $($resp.~$C^{\om})$.
Let $F:\cL D \rightarrow \widetilde{\cL D}$ be a family of biholomorphisms.
Suppose that one of following holds~$:$
\bppp
\item Assume that $F^\la(\gaa_i^\la)=\tilde\gaa_i^\la$ for $i=1,2$ and  there exists $p_\la\in \ov{D^\la}$ such that $\la\mapsto  (p_\la,F^\la(p_\la))$ is continuous.
  \item $Aut(D^\la)=\{\id\}$ for every $\lambda \in [0,1]$.
    \eppp
    Then $F\in  C^{k+1+\all,j}(\ov{\cL D})$
    $($resp. 
  $C^{\om,\om}(\ov {\cL D}))$.
\end{cor}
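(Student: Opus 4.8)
The plan is to transport the family $F$ to the canonical models provided by \rt{1dmarked} and to show that there it reduces to a family of rotations whose parameter is $C^j$ (resp.\ $C^\om$). First I would apply \rt{1dmarked} to both families, fixing $C^j$ markings $a_1(\la),a_2(\la)$ and their analogues for $\widetilde{\cL D}$, to obtain canonical conformal mappings $K^\la\colon D^\la\to A^*_{m,r^\la}(\all^\la,\beta^\la)$ and $\tilde K^\la\colon\tilde D^\la\to A^*_{m,\tilde r^\la}(\tilde\all^\la,\tilde\beta^\la)$, both of class $C^{k+1+\all,j}(\ov{\cL D})$ (resp.\ $C^{\om,\om}$), together with inverses of the same class. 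Writing $G^\la:=\tilde K^\la\circ F^\la\circ(K^\la)^{-1}$, a biholomorphism between the two slit annuli, we have $F^\la=(\tilde K^\la)^{-1}\circ G^\la\circ K^\la$, so it suffices to prove that the defining parameter of $G^\la$ depends on $\la$ in a $C^j$ (resp.\ $C^\om$) fashion.

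\emph{Case (i).} Since $F^\la(\gaa_i^\la)=\tilde\gaa_i^\la$ for $i=1,2$ and $\tilde K^\la$ sends $\tilde\gaa_1^\la$ to $\pd\D_1$ and $\tilde\gaa_2^\la$ to $\pd\D_{\tilde r_2^\la}$, the map $\tilde K^\la\circ F^\la$ is a conformal mapping of $D^\la$ onto a slit annulus carrying $\gaa_1^\la$ to $\pd\D_1$ and $\gaa_2^\la$ to $\pd\D_{\tilde r_2^\la}$. By the uniqueness statement \rt{1dmarked}(ii) it equals $\mu^\la K^\la$ with $|\mu^\la|=1$; hence $G^\la(w)=\mu^\la w$ is a rotation. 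A rotation carries each slit $\Arc(r_\ell^\la,[\all_\ell^\la,\beta_\ell^\la])$ to a slit of the target, so for a permutation $\sigma$ of the slit indices one has $\tilde\all^\la_{\sigma(\ell)}=\all_\ell^\la+\arg\mu^\la\pmod{2\pi}$. The hypothesis that $\la\mapsto(p_\la,F^\la(p_\la))$ is continuous, together with the joint continuity of $K,\tilde K$ and the formula $\mu^\la=\tilde K^\la(F^\la(p_\la))/K^\la(p_\la)$ (legitimate because $0\notin\ov{A^*_{m,r^\la}}$, so $K^\la(p_\la)\neq0$), shows that $\la\mapsto\mu^\la$ is continuous; therefore $\sigma$ is locally constant and $\arg\mu^\la=\tilde\all^\la_{\sigma(3)}-\all_3^\la$ near each parameter. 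By \rt{1dmarked}(iii) the angles $\all_3^\la,\tilde\all^\la_{\sigma(3)}$ are $C^j$ (resp.\ $C^\om$), so $\mu^\la$ is $C^j$ (resp.\ $C^\om$) and $F^\la=(\tilde K^\la)^{-1}(\mu^\la K^\la)$ has the asserted regularity.

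\emph{Case (ii).} Here I would first show that rigidity forces $F$ to be continuous in $\la$ and then reduce to case (i). Uniqueness is immediate: two biholomorphisms $D^\la\to\tilde D^\la$ differ by an element of $Aut(D^\la)=\{\id\}$. For continuity at $\la_0$, take $\la_n\to\la_0$; on compact subsets of $D^{\la_0}$ the maps $F^{\la_n}$ are eventually defined and uniformly bounded, so by Montel a subsequence converges locally uniformly to a holomorphic $F_*\colon D^{\la_0}\to\ov{\tilde D^{\la_0}}$. Applying the same compactness to the inverses $(F^{\la_n})^{-1}$ and passing to the limit in the relations $(F^{\la_n})^{-1}\circ F^{\la_n}=\id$ and $F^{\la_n}\circ(F^{\la_n})^{-1}=\id$ identifies $F_*$ as a biholomorphism of $D^{\la_0}$ onto $\tilde D^{\la_0}$; by uniqueness $F_*=F^{\la_0}$. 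As every subsequence has a further subsequence with this limit, $F^{\la_n}\to F^{\la_0}$, so $\la\mapsto F^\la(p)$ is continuous for each fixed interior point $p$. Continuity makes the boundary correspondence locally constant; since a conformal map sends slit endpoints (the only boundary corners) to slit endpoints, the two genuine circles $\pd\D_1,\pd\D_{r_2^\la}$ must correspond to $\pd\D_1,\pd\D_{\tilde r_2^\la}$, and after relabelling (and, if they are interchanged, composing $G^\la$ with the inversion $w\mapsto\tilde r_2^\la/w$) one gets $F^\la(\gaa_i^\la)=\tilde\gaa_i^\la$, $i=1,2$, locally in $\la$. Taking $p_\la\equiv p$ then supplies a continuous $\la\mapsto(p_\la,F^\la(p_\la))$, so case (i) applies and yields the regularity.

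\emph{Main obstacle.} The essential point is the continuity of $F$ in case (ii): one must guarantee that the locally uniform limit $F_*$ does not degenerate—become constant or fail to be onto—as the domains vary, and this is exactly where rigidity, through the uniqueness of the limiting biholomorphism, is used. The remaining work (the inner/outer circle bookkeeping and the possible inversion) is routine, and everything else is a direct bootstrap from \rt{1dmarked}; the only genuinely new input is the observation in case (i) that the rotation angle is pinned to the $C^j$ slit angles furnished by part (iii), which is what allows one to upgrade mere continuity of $\mu^\la$ to the full $C^j$ (resp.\ $C^\om$) regularity.
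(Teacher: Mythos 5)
Your case (i) is essentially the paper's own argument: transport by the canonical maps of \rt{1dmarked}, identify $G^\la$ as a rotation $\mu^\la$ via the uniqueness statement (ii), get continuity of $\mu^\la$ from the marked pair $(p_\la,F^\la(p_\la))$, and upgrade to $C^j$ (resp.\ $C^\om$) by pinning $\arg\mu^\la$ to the slit angles supplied by parts (iii)--(iv). That half is correct.

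Case (ii), however, has a genuine gap, and it sits exactly where your closing paragraph points. After extracting locally uniform limits $F_*$ of $F^{\la_n}$ and $G_*$ of $(F^{\la_n})^{-1}$, you claim that passing to the limit in $(F^{\la_n})^{-1}\circ F^{\la_n}=\id$ and $F^{\la_n}\circ(F^{\la_n})^{-1}=\id$ identifies $F_*$ as a biholomorphism. That passage to the limit is legitimate only when the intermediate image points stay in a fixed compact subset of the target. By Hurwitz, $F_*$ is either injective or constant; if $F^{\la_n}$ collapses to a constant $b\in\pd\tilde D^{\la_0}$, then for each fixed $z$ the points $F^{\la_n}(z)$ escape every compact subset of $\tilde D^{\la_0}$, so locally uniform convergence of $(F^{\la_n})^{-1}$ gives no information about $(F^{\la_n})^{-1}(F^{\la_n}(z))$, and the composition identities cannot be passed to the limit. (Your argument does rule out a constant limit $b$ in the \emph{interior}, but not on the boundary.) Rigidity does not repair this: $Aut(D^{\la_0})=\{\id\}$ gives uniqueness of the limit \emph{once it is known to be a biholomorphism}, but says nothing about whether a degenerate boundary-constant limit can occur --- and for simply connected domains it genuinely can (disc automorphisms $z\mapsto(z-a_n)/(1-\ov a_n z)$, $a_n\to1$, converge to a boundary constant), so some use of multiple connectivity is unavoidable. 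The paper closes exactly this hole in \rl{hurwitz} by an argument-principle computation: if $\phi_j\to b\in\pd\tilde D$ near a simple closed curve $E\subset D$ encircling a hole, then $\f{1}{2\pi i}\int_E\phi_j'(\zeta)/(\phi_j(\zeta)-a)\,d\zeta\to 0$ (Cauchy estimates force $\phi_j'\to0$ on $E$, while $\phi_j-a$ stays bounded away from $0$ for $a$ a point in the hole of $\tilde D^j$ bounded by $\tilde\gaa_1^j$), contradicting the fact that this integral is the winding number $\pm1$ of $\phi_j(E)$ about $a$. Without this (or an equivalent) non-degeneracy argument, your case (ii) is not proved.

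A secondary, repairable point: your bookkeeping claim that a conformal map between slit annuli must carry the two circle components to the two circle components (``corners go to corners'') is false. Applying Koebe's theorem to a slit annulus with two of its \emph{slits} designated as the distinguished components produces a conformal map onto another slit annulus carrying those slits to the circles, hence circles to slits. The correct route --- and the paper's --- does not constrain the model maps but relabels: continuity of $F^\la$ makes the boundary-component correspondence locally constant in $\la$, so one chooses the designated components of $\widetilde{\cL D}$ to be $\tilde\gaa_i^\la:=F^\la(\gaa_i^\la)$, $i=1,2$, and then invokes case (i).
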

  \begin{proof} Let $\{\Gaa^\la\}$ and $\{\tilde \Gaa^\la\}$ be the parameterizations of $\cL D$ and $\tilde{\cL D}$ of the given regularity.

 $(i).$
 The problem is local in $\la$. Fix $\la_0$. If $p_{\la_0}\in\pd D^{\la_0}\in \gaa_\ell^{\la_0}$ for $\ell\neq1,2$. We have $F^\la(\gaa_\ell)=\tilde\gaa_{\ell'}$ where $\gaa_\ell,\tilde\gaa_{\ell'}$ vary continuous as sets. Swapping $\gaa_\ell,\tilde\gaa_{\ell'}$ with $(\gaa_2,\tilde\gaa_2)$, we may assume that $p_\la\in D^\la\cup\gaa_1^\la\cup\gaa_2^\la$.

By \rt{1dmarked}, we can find canonical conformal mappings
\begin{gather*}
K^{\la}\colon D^{\la}\to A^*_{m,r^\la}(\all^\la,\beta^\la),
\quad
\tilde K^{\la}\colon \tilde D^{\la} \to  A^*_{m,\tilde r^\la}(\tilde\all^\la,\tilde\beta^\la)
\end{gather*}
such that
$K^{\la}(\gaa_1^\la)$ is the unit circle, $|K^{\la}(\gaa_2^\la)| = r^{\la}_2$. Also, $\tilde{K}^{\la}$ is the unit circle and $|K^{\la}(\tilde\gaa_2^\la)| = \tilde{r}^{\la}_2$, for every $\la \in [0,1]$. By assumption, we have $\tilde r^\la_2=r^\la_2$.
Since $\tilde K^{\la} \circ F^{\la}$ is also a canonical mapping, then $\tilde K^{\la} \circ F^{\la}=\mu^{\la}K^{\la}$
 by \rt{1dmarked} $(ii)$, with $|\mu_{\la}| = 1$.
 Since $\la \mapsto(p^\la_1, F^{\la} (p^1_\la))$ is $C^0$, we know that the map $\la \mapsto \mu^{\la}$ is $C^0$.
 This shows that
 $F^\la=(\tilde K^\la)\circ(\mu^\la K^\la)$ is of class $C^{k+1+\all,0}$  in $(D\cup\gaa_1\cup\gaa_2)$.  For higher order derivatives in $\la$, let us verify the following

 \medskip
 \noindent
 {{\bf Claim.}   For each $\ell=3,\dots, m$, there exists $x_\ell^\la\in\gaa_\ell^\la$ so that $\la\mapsto (x_\ell^\la,F^\la(x_\ell^\la))$ is of class $C^j$.}

 \medskip
 To prove the Claim, we take a family of embeddings $\Gaa^\la$ from $\ov D$ onto
 $\ov{D^\la}$ with $\Gaa\in C^{k+1+\all,j}(\ov D)$.
 Take a subdomain $\Om$ in $D$ such that $D\setminus\ov\Om$ has $m$ components, while each component has boundary containing exactly one component of $\pd D$. Let $\Om^\la=\Gaa^\la(\Om)$ and $\widetilde\Om^\la=F^\la(\Om^\la)$. For each $\la$, the boundary of
 each component of
$ \tilde D^\la
 \setminus \ov{\widetilde\Om^\la}$
 has two components $C_i^\la,\tilde C_i^\la$, where $\tilde C_i^\la$ is one of $\tilde\gaa_3^\la,\dots, \tilde\gaa_m^\la$.
 We choose $C_i^\la$ so that it depends on $\la$ continuously, which is possible since $F^\la\circ\Gaa^\la$ is continuous in $D\times[0,1]$. The continuity implies that for each $i$, the $\tilde C_i^\la$ depends continuously on $\la$. Therefore, we have $F^\la(\gaa_\ell^\la)=\tilde\gaa_{i_\ell}^\la$ and $i_\ell$ does not depend on $\la$.

 Let $a_\ell^\la,b_\ell^\la$ be as in \rt{1dmarked} so that
 $$
 K^\la(\gaa_\ell^\la)=Arc(r_\ell^\la,[\all_\ell^\la,\beta_\ell^\la]).
 $$
 We also have  $
 \tilde K^\la(\gaa_\ell^\la)=Arc(\tilde r_\ell^\la,[\tilde\all_\ell^\la,\tilde\beta_\ell^\la]).
 $
Since $\tilde K^\la\circ F^\la=\mu^\la K^\la$, then $F^\la(\gaa_\ell^\la)=\tilde\gaa_{i_\ell}^\la$ implies that
$$
\mu^\la a_\ell^\la=\tilde a_{i_\ell}^\la.
$$
Since $\la\mapsto (a^\la_\ell,\tilde a_{i_\ell}^\la)$ is  of class $C^j$, we conclude that  $\la\mapsto \mu^\la$ is of class $C^j$ and
the Claim is valid for $x_\ell^\la=a^\la_\ell$.

 Using $F^{\la}=(\tilde K^{\la})^{-1}
\mu^{\la} K^{\la}$ again, we obtain that $\{F^{\la}\circ\Gaa^\la\}$ is in
$C^{k+1+\all,j}(D \cup \gamma_1 \cup \gamma_2)$.
Then from the Claim, it follows that  $\{F^{\la}\circ\Gaa^\la\}$ is in
$C^{k+1+\all,j}(D \cup \gamma_i \cup \gamma_\ell)$ for any distinct $i,\ell$.
Therefore, we have proved that $F\in C^{k+1+\all,j}(\ov{\cL D})$. The proof for the $C^\om$ case is by analogy.  The proof of Part $(i)$ is complete.
\end{proof}

The proof of Part $(ii)$ relies on the following Lemma.
The  result gives a sufficient condition for the convergence of a sequence of biholomorphisms between sequences of domains that converge, in the Hausdorff distance on sets.

\le{hurwitz} Let $D$ and $\tilde D$ be $m$-connected bounded domains in $\rr^2$, with $m \geq 3$. For every $j \geq 1$, let $D^j$ and $\tilde{D}^j$ be $m$-connected domains. We denote by $\gamma_1,\dots,\gamma_m$ $($resp.~$\tilde \gamma_1,\dots,\tilde \gamma_m)$ the components of $\pd D$ $($resp.~$\pd \tilde D)$ and by $\gamma^j_1,\dots,\gamma^j_m$ $($resp.~$\tilde \gamma^j_1,\dots,\tilde \gamma^j_m)$ the components of $\pd D^j$ $($resp.~$\pd \tilde{D}^j)$, for $j \geq 1$. We assume that

\bppp
\item   $\partial D \in C^{1+\alpha}$ and $\partial \tilde D \in C^{1+\alpha}$.
\item For every $j \geq 1$, $\partial D^j \in C^{1+\alpha}$ and $\partial \tilde{D^j} \in C^{1+\alpha}$.
\item $\lim_{j \rightarrow \infty}\overline{D^j} = \overline{D}$ and $\lim_{j \rightarrow \infty}\overline{\tilde{D}^j} = \overline{\tilde{D}}$ for the Hausdorff convergence of sets.
\item $Aut(D) = \{\id\}$.
\item For every $j \geq 1$, there is a biholomorphism $\phi_j : D^j \to \tilde{D}^j$.
\eppp
Then $\phi_j$ converges  to the unique biholomorphism $\phi$ from $D$ onto $\tilde{D}$  as $j\to\infty$,
uniformly on compact subsets of $D$.
Moreover, there exists a permutation $\nu$
of $\{1,\dots, m\}$ such that for every $i=1,\dots,m$, we have $\phi(\gaa_i) = \tilde\gaa_{\nu(i)}$ and for sufficiently large $j$, $\phi_{j}(\gaa_i^j) = \tilde\gaa_{\nu(i)}^{j}$.
 \end{lemma}

 We recall that given two non empty subsets $X,Y$ in a metric space $(E,d)$, the Hausdorff distance between $X$ and $Y$ is defined by
 $$
 d_H(X,Y):=\max\left(\sup_{x \in X}d(x,Y), \sup_{y \in Y}d(y,X)\right) \in \mathbb [0,\infty].
 $$
If $X$ and $Y$ are bounded domains in $\cc^n$ endowed with the Euclidean distance, then $d_H(X,Y) < \infty$.

We say that $\overline{D}^j$ converges to $\overline{D}$ for the Hausdorff convergence of sets if $$\lim_{j \to \infty}d_H(\overline{D^j},\overline{D}) = 0.$$
 Hence, if $\lim_{j \to \infty}d_H(\overline{D^Ï},\overline{D}) = 0$ then for every $j$ we may label the components $\gamma^j_1,\dots,\gamma^j_m$ of $\partial D^j$ such that for every $k=1,\dots,m$ we have:
 $$
 \lim_{j \to \infty}d_H(\gamma^j_k,\gamma_k) = 0.
 $$

  \begin{proof}
The uniqueness of $\phi$ is a direct consequence of $(iv)$.

Since all the domains are bounded, the sequence $\{\phi_j\}$ is a normal family. It suffices to show that, taking a subsequence if necessary,
the limit $\phi$ of $\phi_j$ is a biholomorphic mapping from $D$ onto $\tilde D$.
By Hurwitz's theorem, we know that
 $\phi$ is either a constant $b\in\pd \tilde D$ or it is a biholomorphic mapping from $D$
onto $\tilde D$.  See Courant~\cite{Co50}*{p. 191} for two possibilities in a different setting. In our case, we want to rule out the case that $\phi$ is a constant.

Suppose that $\phi=b$ is constant. Let $E$ be a closed simple curve contained in $D$ and close to $\gaa_1$ in the Hausdorff distance. Then $\gamma_1$ is contained in the bounded component of $\cc \backslash E$. Relabeling the components of $D^j$ if necessary, we may assume that the sequence $\gamma^j_1$ converges to $\gamma_1$; hence the bounded component of $\cc \backslash E$ contains
$\gaa_1^{j}$ for $j \geq 1$; for the latter, we may assume that $\gaa_1^j$ is not the outer boundary after applying a Mobius transformation to the family of domains. Using condition $(ii)$ and Kellogg's Theorem on the regularity of $\phi_j$, we know that $\phi_j(\gaa_1^j) = \tilde{\gaa}_1^j$ for every $j \geq 1$, relabeling the components of $\tilde{D}^j$ if necessary.
Moreover $\phi_j(E)$ bounds a bounded domain that contains $\tilde\gaa_1^{j}$ for every $j \geq 1$, by applying a Mobius transformation if necessary.
 Finally, if $a$ is a fixed point in the component of $\mathbb C \backslash\ov{ \tilde{D}}$ bounded by $\tilde{\gamma}_1$, then $a$ is in the component of $\mathbb C \backslash\ov{ \tilde{D^j}}$ bounded by $\tilde{\gamma}^j_1$, for sufficiently large $j$, according to $(iii)$.
Then, for every $j \geq 1$:
$$
\f{1}{2\pi i}\int_E\f{\phi_j'(\zeta)}{\phi_j(\zeta)-a}\, d\zeta = \f{1}{2\pi i}\int_{\phi_j(E)}\f{d\zeta}{\zeta-a} = \f{1}{2\pi i}\int_{\tilde{\gaa}_1^j}\f{d\zeta}{\zeta-a} = \pm 1.
$$
However, the left-hand side tends to $0$ as $j\to\infty$, which is a contradiction. Therefore,
$\phi$ is a biholomorphic mapping from $D$ onto $\tilde D$.  By Kellogg's theorem,
$\phi$ extends to a $C^{1+\all}$ diffeomorphism up to the boundary.

We may assume that
$
\phi(\gaa_i)=\tilde\gaa_i$ for every $j=1,\dots,m$.
We want to show that $\phi^{j}(\gaa_i^{j})=\tilde\gaa_i^{j}$ for   $i=1,\dots,m$ and all sufficiently large $j$.
Let us fix $1 \leq i \leq m$ and let us deform $\gaa_i$ slightly inside $D$ into another simple closed curve $E_i$ contained in $D$, so that $\gamma_i \cup E_i$ bounds a $2$-connected domain and $E_i$ separates $\gamma_i$ from $\gamma_l$, for $l \neq i$. It follows from $(iii)$ that $\gamma_i^j$ and $\gamma_i$ are on the same side of $E_i$ and $E_i$ separates $\gamma^j_i$ from $\gamma^j_l$ for $l \neq i$, for sufficiently large $j$. Since $\phi^{j}$ converges to $\phi$ in a neighborhood of $E_i$, then by considering orientation, we conclude that  $\phi^{j}(\gaa_i^{j})$ and $\phi(\gaa_i)$ are on the same side of  $\phi(E_i)$.
Therefore, $\phi^{j}(\gaa_i^{j})=\tilde\gaa_i^{j}$ for   $i=1,\dots,m$ and   sufficiently large $j$.
\end{proof}

Now Part $(ii)$ of Corollary~\ref{preg} follows from
  \rl{hurwitz} and Part $(i)$ of Corollary~\ref{preg}.

\setcounter{thm}{0}\setcounter{equation}{0}
\section{Negative results}\label{sec3}
In  the one-dimensional  case, we know  from Proposition~\ref{annulus} that counterexamples can be constructed only
for $m$-connected domains with $m\geq3$.

We say that two points $a,b$ in the extended complex plane $\hcc:=\cc\cup\{\infty\}$ are {\it symmetric}   with respect to a circle
if there is a Mobius transformation that preserves the circle and sends $a$ to the center of the circle
and $b$ to $\infty$.
 If $a$ is neither the center nor $\infty$, then $a$ is symmetric to $b$ w.r.t. the circle if and only if $a\ov b=r^2$, where $r$ is the radius of the circle.
Therefore if  two circles are concentric,   the center of the circles and $\infty$   form the symmetric pair   with respect to both circles and the symmetric pair is separated by the two circles. In fact the last two assertions hold for any two disjoint circles, since two disjoint circles are conformally equivalent to two concentric circles.
For an $m$-connected circular domain $D$, there are exactly $s_m:=m(m-1)/2$ pairs $\{a_j,b_j\}$ symmetric w.r.t a pair of circles on the boundary, while $a_j,b_j$ are in separate components of   $\hcc\setminus\ov D$.
Furthermore, $\{a_j,b_j\colon j= 1,\dots, s_m\}$ consists of $2s_m$ points.

\vspace{2mm}
Indeed, if $\{a_1,b_1\}$ is a symmetric pair w.r.t. two circles $S_1, S_2$, we may assume that $a_1=0,b_1=\infty$, while $D$ is contained
in the annulus bounded by $S_1, S_2$. Suppose that $S_1$ is the disk of the smaller radius  and assume that  $\{a_1,b_2\}$ is another symmetric pair w.r.t. two circles $S_3,S_4$ which are  components of $\pd D$. Since $a_1,b_2$ are in the complement of the annulus bounded by $S_3,S_4$, then $S_1$ must be one of $S_3,S_4$. Then $\{a_1,b_2\}=\{0,\infty\}$ and so $S_3,S_4, S_1$ are concentric; this is impossible.

Let $S_r(c):=\partial \mathbb D_r(c)$ be the circle in $\cc$ centered at $c$ with radius $r$.
When $D$ is an $m$-connected circular domain, we write $$\pd D=\cup_{j=1}^m S_{r_j}(c_j).
$$
We say that a sequence of  bounded circular $m$-connected
 domains $D_j$ converges to a bounded $m$--connected domain
  $D'$ if $\pd D_j=\cup_{l=1}^m S_{r_{j\ell}}(c_{j\ell})$ and $\pd D'=\cup_{l=1}^m S_{r'_{\ell}}(c'_{\ell})$ satisfy that $c_{j\ell}\to c_\ell'$ on $\cc$ and $r_{j\ell}\to r_\ell$ as $j\to\infty$.

We first need the following theorem of Koebe~\cite{Gr78}*{Theorem 3.9.1, p. 113}:
 Let $D$ be a  bounded  $m$-connected
domain with $\pd D\in C^{1+\all}$ in $\cc$. Fix $z_0\in D$. Then there exists a unique biholomorphic mapping $K$ from $D$ onto
$\hat D$ such that $K(z_0)=0$, $K'(z_0)=1$, and $\hat D$ is a circular planar domain (i.e. $\pd \hat D$
is a union of circles).

The uniqueness assertion implies that every biholomorphic mapping between
two circular domains is a Mobius transformation.  Furthermore, for each $m$-connected circular domain $D$ with $m\geq2$, an automorphism $\tau$ of $D$ is uniquely determined by $\tau(a)$ and $\tau(\gamma)$, where $\gaa$ is a connected component of $\pd D$ and $a$ is  $\pd D\setminus\gamma$.
When $\pd D$ consists of $m$ components $\gaa_1,\dots,\gaa_m$, with $m \geq 3$, for each permutation $\nu$ of $\{1,\dots, m\}$ there exists at most one
biholomorphism $\phi$ of $D$ such that $\phi(\gaa_i)=\gaa_{\nu(i)}$ for $i=1,\dots,m$.

\subsection{Automorphisms of multi-connected circular domains}
For distinct points $a,b,c$ in the Riemann sphere $\hat \cc$, let $\var_{a,b,c}$ be the Mobius transformation sending $0,1,\infty$ to $a,b,c$ respectively.
  We first have the following elementary result.
\le{mscon} Let $a,b,c$ be distinct points in   $\hat{\cc}$. Let $a_j$, $b_j$ and $c_j$ be sequences of complex numbers such that $a_j \to a$, $b_j \to b$ and $c_j \to c$, as $j \to \infty$, in spherical distance. Then $\var_{a_j,b_j,c_j}$ and  $\var_{a_j,b_j,c_j}^{-1}$ respectively converge to $\var_{a,b,c}$ and $\var_{a,b,c}^{-1}$ in the spherical distance, uniformly on   $\hat{\cc}$, as $j \to \infty$.
\ele

\begin{proof}
Fix distinct $a_*$, $b_*$, $c_*$ in  $\hat{\cc}$. To see the continuity, we need an expression of $\var:=\var_{a,b,c}$ or $1/\var$ when $(a,b,c)$ is close to a point $(a_*,b_*,c_*)$.  Set $\displaystyle q:=\f{b- c}{b- a}$.

\noindent{\bf Case 1.} $a_* \in \cc$.

\vspace{2mm}
  If $c_* \in \cc$, the Mobius transformation $\varphi:=\varphi_{a,b,c}$ is given by
$$
\var(z)=\f{c z -aq}{z-q}. 
$$
Then  $\var$ and $1/\var$ are continuous in all variables $a,b,c,z$, respectively when $z$ is away from $q$ and $z$ is close to $q$.
  This shows that $\var\colon\hcc\to\hcc$ is continuous in all variables, when $a_*,c_*$ are in $\cc$.

  If $c_*=\infty$, we rewrite $\displaystyle \frac{q}{c}=\frac{1-\f{b}{c}}{a-b}$. Hence we can write
$$
\var(z)=\f{z-a\f{q}{c}}{\f{1}{c}z-\f{q}{c}}.
$$
In particular, the maps $\var$ and $1/\var$ are continuous in all variables respectively
when $z$ is close to $a\f{q}{c}$ and it is away from $a\f{q}{c}$.

\vspace{2mm}
\noindent{\bf Case 2.} {\sl $a_*=\infty$.
}

\vspace{2mm}
We express
$$aq=\displaystyle\f{b-c}{\f{b}{a}-1},\quad
\f{1}{\var(z)}=\f{z-q}{cz-aq}.
$$
It is easy to see that $\var$ is continuous in all variables when $z$ is away from $q$, while $1/\var$ is continuous  in all variables for $z$ near $q$.

Finally, for the inverse mapping,  we have
 $$\var_{a,b,c}^{-1}(w)=\f{qw -aq}{w-c}.$$
 It is clear that $(a,b,c)\mapsto q$ is continuous. Then the continuity of $\var_{a,b,c}^{-1}(w)$ in $a,b,c,w$ follows from the continuity of $\var$ proved above.
\end{proof}

We have the following elementary result.
\le{semicon}
For every $j \geq 1$, let $D_j$ be a bounded $m$--connected circular domain, with $m \geq 2$, and let $\var_j$ be an automorphism  of $D_j$.
 Suppose that $D_j$ converge to an $m$-connected circular domain $D$. Then a subsequence of $\var_j$ converges uniformly on $\hcc$, in spherical distance, to an  automorphism of $D$.
\ele

  \begin{proof}
We first treat the case $m=2$ by a direct computation. Using Mobius transformations, we may find, for every $j \geq 1$, a real number $0 < r_j < 1$  and a biholomorphism $\phi_j$ from $D_j$ onto an annulus $A_{r_j}$. Furthermore, there is a Mobius transformation $\phi$ such that $\phi_j \to \phi$ uniformly on $\hcc$, as $j \to \infty$.
Then $\psi_j:=\phi_j \circ \var_j \circ \phi_j^{-1}$ is in the group generated by $I_{r_j}\colon z\mapsto r_j/z$ and $M_\mu\colon z\mapsto \mu z$ with $|\mu|=1$. Since the convergence of $D_j$ implies the convergence of $r_j$ to some $r$ satisfying $0 < r < 1$, it is easy to show that there is a subsequence of $\psi_j$ that converges to
an automorphism of $A_{r}$. Hence we may extract from $\var_j$ a converging subsequence.

Assume now that $m>2$.
Let $\{a^j_1,b^j_1\},\dots,\{a^j_{s(m)},b^j_{s(m)}\}$ be $s_{m}=m(m-1)/2$  pairs of symmetric points on $D_j$.  We know that the set $\{a^j_1,b^j_1,\dots,a^j_{s_m},b^j_{s_m}\colon j=1,\dots, s_m\}$ has exactly $2s_m$ points and $2s_m>2$.  Note that $\var_j$ preserves the set
$\{a^j_1,b^j_1,\dots,a^j_{s_m},b^j_{s_m}\colon j=1,\dots, s_m\}$.
By \rl{mscon}, we get a subsequence of $\var_j$ that converges in the spherical distance to an automorphism of $D$, uniformly on $\hcc$.
\end{proof}

Let $\mu, a, r$ satisfy
\eq{muar}
\quad 0<\mu<a-r<a+r<1.
\eeq

Then we may consider the 3-connected circular domain
\eq{T}
T(\mu,a,r)= \D\setminus \left\{\ov{\D_\mu(0)}\cup\ov{\D_r(a)}\right\}.
\eeq

Moreover, for $0\leq\e<1-a-r$, the domain $T(\mu,a,r+\e)$ is  still  circular.

Recall that a domain in $\cc^n$ is {\it rigid} if the identity map is the only automorphism   of the domain.

\le{rigid} Let  $\tau$ be the involution $z \mapsto \mu/z$ and let $T(\mu,a,r)$ be given by \rea{muar}-\rea{T}. \bppp
\item $Aut(T(\mu, a,r))=\{I,\tau\}$
 if and only if
\ga\label{muforI}
\mu=a^2-r^2, \\
  (a^2-r^2)^2-(r^{-1}-1)(a^2-r^2)+1\neq 0.
  \label{muforI+}
\end{gather}
\item $Aut(T(\mu, a,r))$ has the maximum 6 elements and  is generated by two non commutative involutions $\tau,\var_b$
where $\displaystyle b=\f{(a+r)-\mu}{1-(a+r)\mu}$ and $\displaystyle \var_b(z)= -\f{z-b}{1-bz}$, if {\rm(\ref{muforI})} holds and {\rm(\ref{muforI+})} does not hold.

\item Suppose that $\mu,a,r$ satisfy \rea{muforI}-\rea{muforI+}. Then $T(\mu,a,r+\e)$ is rigid when $\e$ is a sufficiently small  positive  real number.
\eppp
\ele

\begin{proof}
The union of $3$   symmetric pairs of $T(\mu,a,r)$ consists of
  $6$ distinct points on the real axis, which are in the complement of the closure of $T(\mu, a,r)$. Thus any Mobius transformation that  preserves the set of the six points must preserve the real axis too, of which $0,\infty$ are among the six points.

We first prove by a direct computation that $\tau \in Aut(T(\mu,a,r))$ if and only if (\ref{muforI}) holds. Indeed, if $\tau(S_r(a)) = S_r(a)$ then $\tau(a-r) = a+r$ which implies (\ref{muforI}). Now, if (\ref{muforI}) holds, then $\tau(a-r) = a+r$, $\tau(a+r) = a-r$ and $\tau(S_r(a))$ is a circle whose boundary contains the two points $a-r$ and $a+r$ and is orthogonal to the real axis at these two points. Then $\tau(S_r(a)) = S_r(a)$.

We can now prove $(i)$ and $(ii)$. Let $\var$ be an automorphism of $T(\mu,a,r)$, $\var \neq \tau$. There are different cases to  be considered.

\medskip
\noindent {\bf Case 1.} $\var(S_1(0)) = S_1(0)$.
\medskip

If $\var(1) = 1$ and $\var(-1) = -1$ then due to orientation we have necessarily $\var(-\mu) = -\mu$ since $\var$ preserves the real axis. Hence $\var = \id$.

If $\var(1) = -1$ and $\var(-1) = 1$ then $\var$ must interchange two other circles of the boundary for $\var$ not being the identity. Thus $\displaystyle \var(z)=c \f{z-b}{1-bz}$ where $b$ and $c$ are real and it follows that $c=-1$.
 Then we must have
$$
\var(z)= -\f{z-b}{1-bz},\quad b\in(-1,1).
$$
Since $\var^2=\id$, we have, due to orientation:
$$
\var(-\mu)=a+r, \quad\var(\mu)=a-r.
$$
By a simple computation,  we get
$$
b=\f{(a+r)-\mu}{1-(a+r)\mu}, \quad b=\frac{(a-r)+\mu}{1+\mu(a-r)}.
$$
This shows that
\eq{b=b}
\f{(a+r)-\mu}{1-(a+r)\mu}=\frac{(a-r)+\mu}{1+\mu(a-r)}.
\eeq
Since $\mu=a^2-r^2$, the above equality is equivalent to the fact that \re{muforI+} does not hold.

\medskip
\noindent {\bf Case 2.} $\var(S_1(0)) = S_{\mu}(0)$. Then $\tau \circ \var$ is an automorphism of $T(\mu,a,r)$ that preserves $S_1(0)$. Hence from Case 1 we conclude that either $\var = \tau$ or $\var = \tau \circ \var_b$.
 Note that the latter occurs only if \re{b=b} holds.

\medskip

\noindent {\bf Case 3.} $\var(S_1(0)) = S_r(a)$.
We then have two cases:  $(a)$ $\var(S_r(a))=S_\mu(0)$ and hence $\var(S_\mu(0))=S_1(0)$, or (b) $\var(S_\mu(0))=S_\mu(0)$. When $(a)$ occurs, we have by Case 2 $\var^{-1}=\tau$ or $\var^{-1}=\tau\circ\var_b$.  (The former actually does not occur.) When  $(b)$ occurs, we have by Case 2 $\var\tau=\tau$, which is impossible or $\var\tau=\tau\var_b$.  Chasing after the images of boundary components of $T(a,\mu,r)$ under the compositions of $\tau,\var_b$, we can verify that the automorphism group has six elements: the identity mapping, $ \tau,\var_b,\tau\var_b,\var_b\tau$ and finally $\tau\var_b\tau=\var_b\tau\var_b$.

\medskip

We can now prove $(iii)$. Suppose that there is $\e_j\to0$ with $\e_j\neq0$ such that $T(\mu, a,r+\e_j)$ admits a non-trivial automorphism $\var_j$. We first note that $\var_j$ cannot preserve each component of $T(\mu, a,r+\e_j)$. Otherwise $\var_j$ preserves the annulus bounded by $S_\mu(0)$ and $S_1(0)$. So it is a rotation and the only
rotation that preserves $T(\mu, a,r+\e_j)$ is the identity. By Lemma~\ref{semicon}, we may assume that $\var_j$ converges to an automorphism $\var$ of $T(\mu,a,r)$, uniformly on $\hcc$.
Thus $\var$ is not the identity and so $\var=\tau$ by $(i)$. Then $\var_j$ must interchange $S_\mu(0)$ and $S_1(0)$. Thus $\var_j$ is the composition of $\tau$ with a rotation. Clearly $\var_j$ cannot preserve $S_r(a+\e)$ for $\e \neq 0$.
\end{proof}

\subsection{Proof of Theorem~\ref{negative} for $n=1$}

Let us define two families of smooth domains $(D^{\la})_{-1 \leq \la \leq 1}$ and $(\tilde{D}^{\la})_{-1 \leq \la \leq 1}$ as follows:
 \begin{gather*}
 D^{\la} = T\left(\mu,a,r+ e^{-\f{4}{\la^2}}\right) \ {\rm for}\ -1 \leq \la \leq 1,\\
  \tilde D^{\la} = D^{\la} \ {\rm for} \ 0\leq \la \leq 1\ \ {\rm and} \ \ \tilde D^{\la} = \tau D^{\la} \ {\rm for} \ -1\leq \la \leq 0,
 \end{gather*}
for $\mu=\f{3}{16}$, $a=\f{1}{2}$ and $r=\f{1}{4}$.
  Note that $\mu,a,r$ satisfy \re{muforI}-\re{muforI+}.

\vspace{2mm}
We first point out that $\psi_{\la}=\id$ for $\la \geq 0$ and $\psi_{\la}=\tau$ for $\la < 0$ are biholomorphisms from $D^{\la}$ to $\tilde{D}^{\la}$.
We have the following precise version of Theorem~\ref{negative} for $n=1$.

  \begin{proof}
We first prove that the two families $\cL D$ and $\widetilde{\cL D}$ are smooth families.
We consider the following parameterization of $\partial D^{\la}$, denoted by $\Gamma^{\la} :  \partial{T}\left(\mu,a,r\right) \to \partial{T}\left(\mu,a,r+ e^{-\f{4}{\la^2}}\right)$ with
$$
\left\{
\begin{array}{cll}
\Gamma^{\la} & = & \id \ {\rm on}\ S_1(0),\\
\Gamma^{\la} & = & \id  \ {\rm on}\ S_{\mu}(0),\\
\Gamma^{\la}(a + r e^{it}) & = &  a + \left(r+e^{\f{-4}{{\la}^2}}\right)e^{it},\ \forall\,  t \in [0,2\pi].
\end{array}
\right.
$$
Then $(x,\la)\mapsto\Gamma^{\la}(x)$ is smooth of class $C^{\infty}$ on $(S_1(0) \cup S_{\mu}(0) \cup S_r(a)) \times [-1,1]$. Hence, according to~\cite{BG14}, $\{\Gamma^{\la}\}$ admits a smooth extension,  still denoted $\Gamma^{\la}$, as embeddings $\Gaa^\la$ from $\overline{T\left(\mu,a,r\right)}$ to $\overline{T\left(\mu,a,r+ e^{-\f{4}{\la^2}}\right)}$.

We consider now the following parameterization 
$\tilde \Gamma^{\la} : \partial{T}\left(\mu,a,r\right) \to \partial \tilde{D}^{\la}$ with
\gan
\left\{
\begin{array}{cll}
\tilde \Gamma^{\la} & = & \id\ {\rm on}\ S_1(0),\\
\tilde \Gamma^{\la} & = & \id\ {\rm on}\ S_{\mu}(0),\\
\tilde \Gamma^{\la}(a + r e^{it}) & = &  a + \left(r+e^{\f{-4}{{\la}^2}}\right)e^{it},\quad \forall\,  t \in [0,2\pi],\  \forall\,  \la \geq 0,\vspace{.75ex}
\\
\tilde \Gamma^{\la}(a + r e^{it}) & = &  \frac{\mu}{a + \left(r+e^{\f{-4}{{\la}^2}}\right)e^{it'}},\quad\quad\ \ \, \forall\,  t \in [0,2\pi],\ \forall\,  \la < 0,
\end{array}
\right.
\end{gather*}
where, for every $t \in [0,2\pi]$, the $t'$ is the unique solution on $[0,2\pi]$ of the equation
\begin{equation}\label{real}
a + re^{it} = \frac{\mu}{a + re^{it'}}.
\end{equation}
In particular, for every $t \in [0,2\pi]$:
$$
e^{it'} = -e^{it} \frac{a+re^{-it}}{a+re^{it}}
$$
and the parametrization $\displaystyle (t,\la) \mapsto \frac{\mu}{a + \left(r+e^{\f{-4}{{\la}^2}}\right)e^{it'}}$ is smooth for $\la < 0$.

Hence, to prove that the mapping $(x,\la)\mapsto\tilde{\Gamma}^{\la}(x)$ is smooth of class $C^{\infty}$ on $(S_1(0) \cup S_{\mu}(0) \cup S_r(a)) \times [-1,1]$, we need to verify that
$$
a + \left(r+e^{\f{-4}{{\la}^2}}\right)e^{it} = \frac{\mu}{a + \left(r+e^{\f{-4}{{\la}^2}}\right)e^{it'}}
$$
up to infinite order (with respect to $\la$) at $\la = 0$.
However
$$
a + \left(r+e^{\f{-4}{{\la}^2}}\right)e^{it} = a + re^{it} + O(\la^{\infty}),
$$
  i.e. the function $\la \mapsto a + \left(r+e^{\f{-4}{{\la}^2}}\right)e^{it} - (a + re^{it})$ vanishes up to infinite order at the origin, and
$$
\frac{\mu}{a + \left(r+e^{\f{-4}{{\la}^2}}\right)e^{it'}} = \f{\mu}{a + re^{it'}} + O(\la^{\infty}).
$$
The regularity of $\tilde{\Gamma}^{\la}$ on $S_r(a)$ follows now from (\ref{real}).

Then $\{\tilde \Gamma^{\la}\}$ admits a smooth extension, still denoted by $\{\tilde \Gamma^{\la}\}$, as an embedding from $\overline{T(\mu,a,r)}$ to $\overline{\tilde{D}^{\la}}$, according to \cite{BG14}.  Hence the two families of domains are smooth.

\vspace{2mm}
For a sufficiently small positive number $\la_0$ and $0<\la\leq\la_0$,
 the   automorphism group of $D^{\la}$ is $\{\id\}$ according to Lemma~\ref{rigid} $(iii)$; hence $\var^{\la}$ is smooth with respect to $\la$ for $\la > 0$. For $-\la_0\leq\la < 0$,
the   biholomorphism from $D^{\la}$ to $\tilde{D}^{\la}$ is $\tau$ according to Lemma~\ref{rigid} $(iii)$. Hence $\var^{\la}$ is smooth with respect to $\la$ for $\la < 0$. Finally, from the values of $\var^{\la}$, $\var^{\la}$ can be continuous at $\la=0$ only for $z$ such that $\tau(z) = z$, that is $z = - \sqrt{\mu}$ since $a-r < \sqrt{\mu} < a+r$, meaning $\sqrt{\mu} \in \D_{r}(a)$.

The above proof is complete for $3$-connected domains in \rt{negative}. Suppose now that $m\geq4$. Fix $\mu$ and $\tau$ as above. We consider   real numbers
$$
\mu<r_1<\dots<r_{2m-2}<1,
$$
with $r_{2m-2-i}=\tau(r_i)$. Let $C_i$ be the circle which is perpendicular to real axis and passes through $r_{2i-1},r_{2i}$.  Let $T_{\mu,r}$ be the domain bounded by $C_1,\dots, C_{m-1}$, $S_\mu(0)$ and $S_1(0)$.
We want to show that if $\e$ is positive and sufficiently small, the domain $T^\e:=T_{\mu,r^\e}$ is rigid, for
$$
r^\e=(r_1+\e,r_2+\e^2,r_3,\dots, r_{2m-2}).
$$
Let $\var$ be an automorphism of $T^\e$. We want to show that $\var=\id$ when $\e>0$ is sufficiently small.  Let $C_1^\e$ be the circle perpendicular to the real axis and passes through $r_1^\e,r_2^\e$. There are three cases.

\medskip

\noindent
{\bf
Case $1$.}
$\var(C_1^\e)=C_1^\e$. Then $\var$ must be a Mobius transformation preserving the set $E:=\{\pm1,\pm\mu,r_3,\dots, r_{2m-2}\}$. There are only finitely many such mappings $\var$  and we list all of them as $\var_0,\dots, \var_N$ with $\var_0$ being the identity. Note that the list does not require  $\var_i$   be an automorphism of $T^\e$.
We write
$$
\var_j(z)=\f{a_jz+b_j}{c_jz+d_j}.
$$
Then $\var$ is one of $\var_1,\dots, \var_N$, say $\var=\var_1$. Let us drop the subscript in $a_1,b_1,c_1,d_1$. Since $\var$ preserves the real axis, then $\var$ either interchanges $r_1-\e,r_2+\e$, or fixes both $r_1-\e$ and $r_2+\e$. For the latter, we want to show that $\var$ is the identity when $\e$ is positive and sufficiently small. Indeed, let $\psi$ be a  Mobius transformation  sending $(r_1+\e,r_2+\e^2)$ to $(0,\infty)$ and $r_1+\e$ to $0$.
Then $\psi\var\psi^{-1}(z)=\la z$. Note that $E$ is outside $C_1^\e$. Hence $\psi(E)$ is contained in the negative real
axis. Since $\psi\var\psi^{-1}$ preserves $\psi(E)$, then it is the identity map. This contradicts that $\var$ is not the identity. Therefore, we are left with the case
 $\var(r_1-\e)=r_2+\e^2$; equivalently, we have
$$
\f{a(r_1+\e)+b}{c(r_1+\e)+d}=r_2+\e^2.
$$
The equation admits finitely many solutions in $\e$.

\medskip

\noindent
{\bf
Case $2$.}
$\var(\tilde C)=C_1^\e$ and $\tilde C\neq C_1^\e$. In this case we find components $C,C'$ of $\pd T^\e$ other than $C_1^\e$ so that $\var(C)=C'$. Suppose that the intersection of $C$ (resp.~$C',\tilde C$) with the real axis
is $\{\all,\beta\}$ (resp.~$\{\all',\beta'\}$, $\{\tilde\all,\tilde\beta\}$). In fact, we take
$\all'=\var(\all)$ and $\beta'=\var(\beta)$.  Define
$$
K_{\all,\beta}(z)=\f{z-\all}{z-\beta},\quad K_{\all,\beta}^{-1}(z)=\f{\beta z-\all}{z-1}.
$$
Then $K_{\all',\beta'}\var K_{\all,\beta}^{-1}$ must be a dilation:
$$
\var(z)=K^{-1}_{\all',\beta'}(\del K_{\all,\beta}(z)).
$$
Now $\var(\tilde C)=C_1^\e$ implies that $\var(\tilde\all)=r_1+\e$ and $\var(\tilde\beta)=r_2+\e^2$. Therefore,
\gan
\f{\beta'\del K_{\all,\beta}(\tilde\all)-\all'}{\delta K_{\all,\beta}(\tilde\all)-1}=r_1+\e,\\
\f{\beta'\del K_{\all,\beta}(\tilde\beta)-\all'}{\delta K_{\all,\beta}(\tilde\beta)-1}=r_2+\e^2.
\end{gather*}
Since $\tilde\all\neq\all,\beta$ then $K_{\all,\beta}(\tilde\all)\neq0,\infty$. Moreover, $\beta'\neq r_1$ because $C'\neq C_1^\e$ and $\e$ is small.
Thus we can solve for $\del$ from the first equation and rewrite the second equation as
$$
\f{a_*\e+b_*}{c_*\e+d_*}=r_2+\e^2,
$$
where $a_*,b_*,c_*,d_*$ are finite and independent of $\e$ and $c_*=K_{\all,\beta}(\tilde\all)\neq0$. The above equation has only finitely many solutions for $\e$.
Again, we can verify that $C_1^\la,C_2,\dots, C_{m-1}$, $S_1(0)$ and $S_\mu(0)$ bound a smooth family of domains $D^\e$ for $\e>0$. Furthermore, $D^\e$ are rigid when $\e$ is positive and small, while $D^\la$ admits
the non-trivial automorphism $\tau$. The rest of the proof is similar to the proof for  case $3$-connected domains.
\end{proof}

\subsection{Proof of Theorem~\ref{negative} for $n\geq 2$}

We use a construction of rigid domains by Burns-Shnider-Wells~\cite{BSW78}, perturbing  the unit ball.  Using  a Cayley transformation $\Psi$, we identity  $ \cL U_{n}=\{(z,w)\in \cc^{n-1} \times \cc \colon \IM w>|z|^2\}$ with the unit ball.  Let $f(z,u+iv)=u^4+|z|^4$. Fix a real-valued polynomial
$$
N(z,\ov z, u)=\sum_{m\leq \ell\leq m'}\sum_{|I|+|J|+2k=\ell}N_{IJ k}z^I\ov z^J u^k
$$
such that the only matrix $U\in U(n-1)$ satisfying  $N(Uz,\ov{Uz},u)=N(z,\ov z,u)$ is the identity matrix.  Furthermore,
\eq{vzN}
v=|z|^2+N(z,\ov z,u)
\eeq
has  pseudohermitian curvature
that does not vanish on any non-empty open set
with $u^4+|z|^4\leq1$. For instance, it suffices to choose an $N$  such that
\eq{trac42}
\operatorname{Tr}^2\sum_{|I|=4,|J|=2}\sum_{k=0}^\infty N_{IJk}z^I\ov z^Ju^k\not\equiv0,
\eeq
 where Tr denotes the trace  (see~\ci{CM74}*{p.~268} for detail.)
 For an  example of $N$, we can   choose   $\e_0>0$ sufficiently small and
$$
N(z,\ov z,u)=\e_0\sum_{j=1}^{n-1}2\RE\{z_j^4\ov z_j^2(1+\ov z_j)\}u^{j-1}.
$$
Then  \re{vzN} defines a strictly pseudoconvex real hypersurface.
 The trace given by \re{trac42} equals $\e_0\sum 2\RE(z_j^2)u^{j-1}$, which is not identically zero.
Define
$$
\var^{\lambda}(z,u)=\begin{cases}\e\exp^{ \dfrac{- f(z,u)}{{1-f(z,u)}}-\dfrac{1}{{|\lambda|}}}&\text{if \ $f(z,u)<1$ and $\lambda\neq0$},\\
 & \\
0&\text{otherwise},
\end{cases}
$$
with $\e>0$. For $\lambda \in \mathbb R$, we consider the domain $\Omega^{\lambda}$ defined by
\begin{equation*}
\displaystyle{
\Omega^{\lambda}=\{(z,w) \in \mathbb C^{n-1}\times \mathbb C : v>|z|^2+N(z,\ov z,u)\var^\la(z,u)\}.
}
\end{equation*}
For $\la = 0$, $\Omega^{\la} = \cL U_{n}$. When $\la \neq 0$ and $\e$ is sufficiently small,    the real analytic hypersurface $\pd \Omega^{\la}\cap\{f<1\}$ remains non-spherical
 as pseudohermitian curvature is not identically zero (see~\ci{CM74}*{p.~260}).

Using the Cayley transform $\Psi$, we pull back $\Omega^{\la}$ to a strictly convex domain $D^{\la} \subset \subset \cc^{n}$, as the pole of the Cayley transform does not intersect   $\pd\Om^\la$ when $\e$ is sufficiently small. For $\la = 0$, $D^{\la}$ is the unit ball in $\cc^n$. For $\la \neq 0$,  the real hypersurface $S_-^\la:=\Psi^{-1}(\pd \Omega^{\la}\cap\{f<1\})$
remains non-spherical and $S^{\lambda}_+:=\Psi^{-1}(\pd D^{\la}\setminus \Psi^{-1}(\{f>1\}))$ is part of the unit sphere.

We now apply an argument in~\cite{BSW78}*{sect.~6},  valid for  $\cc^n$ for each $n\geq2$ and the above example of $N$. For the reader's convenience, we recall their proof here. Fix $\la>0$. Let $T$ be an automorphism of $D^\la$. By Fefferman's theorem, it extends to a smooth CR automorphism of $\pd D^{\lambda}$. Since $S_-^{\lambda}$ is non-spherical in any open subset by \re{trac42}, then
$$
T(S_-^{\lambda})\cap S_+^{\lambda}=\emptyset.
$$
This shows that $TS_+^{\lambda}=S_+^{\lambda}$ and $T\pd S_+^{\lambda}=\pd S_+^{\lambda}$. It is proved in~\cite{BSW78} that $T(z,w)=(Uz,w)$   for every $(z,w) \in \cc^{n-1} \times \rr$, where $U$ is a unitary matrix. We now have
$$
N(Uz,\ov U\ov z,u)\var^{\la}(Uz,u)=N(z,\ov z,u)\var^{\la}(z,u),
$$
for $|z|^4+u^2<1$.  Note that $\var^\la(Uz,u)=\var^\la(z,u)$. Therefore, $N(Uz,\ov U\ov z,u)=N(z,\ov z,u)$ and we obtain $U=\id$.

Let $\tilde D^{\la}=D^{\la}$ for $\la\geq0$ and $\tilde D^\la= \tau D^{\la}$ for $\lambda<0$. Here $\tau: z \mapsto -z$.
Note that the smooth function
$\var^\la
$
 vanishes to infinite order at $\lambda=0$ or $f=1$.  Thus  $\{{D^{\la}}\}$, $\{\tilde D^{\la}\}$ are  smooth families of smoothly bounded strictly convex domains.

 \medskip

 The proof of \rt{negative} is complete.

\setcounter{thm}{0}\setcounter{equation}{0}

\section{Positive results in higher dimensions}\label{sec4}

One of the features of the counterexample in Theorem~\ref{negative}  is that  the fiber at $\la = 1/2$, $D^{1/2}$, has a non trivial automorphism group. The following theorem shows that this non rigidity property is the obstruction for the  continuous extension.

\th{boundary} Let $\cL D$, $\widetilde{\cL D}$ be two continuous families of  bounded strictly pseudoconvex domains of $C^{2}$ boundaries in $\cc^n$.  Let $F: \cL D \to \widetilde{\cL D}$ be a family of biholomorphisms. If $Aut(D^0) = \{\id\}$, then the map $F^\la\circ\Gaa^\la$ converges  to $F \circ\Gaa$ uniformly in $\ov D$ as $\la\to0$, for any parameterization $\Gaa\in C^{2,0}(\ov D)$ of $\ov {\cL D}$.
\eth

   The case $n=1$ is proved in \rl{hurwitz}.  We now assume $n\geq2$.
The proof consists of following steps. We first establish the convergence of $F^\la$ in compact sets as $\la\to0$ by using Pinchuk's scaling method.  As a consequence we control the distance to the boundary for $F^\la$. Combining it with the uniform Hopf lemma, we obtain a precise control of the distance from $F^\la(x)$ to $\pd\tilde D^\la$.
It is worthy pointing out that when domains are fixed, the scaling step is not necessary. This is one of the differences when we deal with a family of domains vs individual domains. In fact, by Henkin's theorem we already know that each biholomorphism   $F^{\la}$ is already   H\"older-$\f{1}{2}$ continuous  up to the boundary of  $D^{\la}$.
The uniform $C^{1/2}$ estimate is obtained by a uniform estimate on the Kobayashi metric.

We first point out that this is a local result, meaning that we only need to consider $\lambda$ close to the zero.
We now provide the proof via several lemmas.

\le{claim}
As $\la\to0$, the mapping $F^\la $ converges to $F^0$ uniformly in each compact subset of $D$.
\ele
\begin{proof}
 Note that it will follow from the Cauchy inequalities that every derivative of $F^{\la}$ will converge in compact sets, when $\la$ goes to 0, to the corresponding derivative of $F^0$.

 The method to prove the lemma is now classical and relies on the scaling method introduced by Pinchuk~\cite{Pi}. For the convenience of the reader we present this in detail. To prove the lemma, it suffices to prove that every sequence of biholomorphisms from $D^{\la}$ to $\tilde D^{\la}$ admits a subsequence that converges to $F^0$. Indeed, that will imply that the family $(F^{\la})$ converges, in the compact-open topology on $D^0$, to $F^0$.

Consider a sequence $(F^{\la_k})$ of maps of $F$, with $\la_k \to 0$ as $k \to \infty$.
Since the domains $\tilde D^{\la}$ are contained in a fixed bounded domain, we may extract from $(F^{\la_k})$ a subsequence that converges, uniformly on compact subsets of $D^0$, to some holomorphic map from $D^0$ to $\ov{\tilde D^0}$. Since maps $F^{\la_k}$ are biholomorphisms from $D^{\la_k}$ onto $\tilde D^{\la_k}$, then the subsequence either converges to $F^0$, the unique biholomorphism from $D^0$ to $\tilde D^0$, or
there is a point $z^0\in D^0$ such that $F^{\la_k}(z^0)$ approaches to the boundary of $\tilde D^0$.
Without loss of generality, we may assume for the latter that
\eq{z0Lim}
\lim_{k \to \infty}F^{\la_k}(z^0) = p.
\eeq

We now seek a contradiction to \re{z0Lim}.
By assumption, there exist $0 <\lambda_0 < 1$ and a domain $U_0 \subset \subset \mathbb C^n$ such that for every $0 \leq \lambda \leq \lambda_0$ we have the inclusion $\overline{D^{\lambda}} \subset U_0$. Moreover, we may consider for every $0 \leq \lambda \leq \lambda_0$ a defining function $r_{\lambda}$ of $D^{\lambda}$ such that:
\begin{itemize}
\item[(i)] $r_{\lambda}$ is strictly pluri-subharmonic on $\overline{U_0}$,
\item[(ii)] $\lim_{\lambda \rightarrow 0}\|r_{\lambda}-r_0\|_{ C^2(\overline{U_0})}= 0$.
\end{itemize}
Shrinking $\la_0$ if necessary, there is a domain $\tilde U_0 \subset \subset \mathbb C^n$ such that for every $0 \leq \lambda \leq \lambda_0$ we have the inclusion $\overline{\tilde D^{\lambda}} \subset \tilde U_0$. Moreover, we may consider for every $0 \leq \lambda \leq \lambda_0$ a defining function $\tilde r_{\lambda}$ of $\tilde D^{\lambda}$ such that:

\begin{itemize}
\item[(iii)] $\tilde r_{\lambda}$ is strictly pluri-subharmonic on $\overline{\tilde U_0}$,
\item[(iv)] $\lim_{\lambda \rightarrow 0}\|\tilde r_{\lambda}-\tilde r_0\|_{ C^2(\overline{\tilde U_0})}= 0$.
\end{itemize}

Next, we note that the domain $\tilde D^0$, being strictly pseudoconvex,  admits a (local) peak holomorphic function $\tilde \varphi$ at $p$.  Namely, there exists a neighborhood $\tilde V_0$ of $p$ in $\mathbb C^n$, contained in $\tilde U_0$, such that $\tilde \varphi$ is holomorphic on $\tilde D^0 \cap \tilde V_0$ and continuous on $\overline{\tilde D^0} \cap \tilde V_0$, and
$$
\left\{
\begin{array}{lll}
\tilde \varphi(p) & = & 1,\vspace{.75ex}\\
|\tilde \varphi(z)| & < & 1,\ \forall\,  z \in (\overline{\tilde D^0} \cap \tilde V_0)\backslash \{p\}.
\end{array}
\right.
$$
Hence, according to (\ref{z0Lim}), the sequence $(F^{\la_k})_k$ converges in the compact-open topology on $D^0$ to the point $p$,
by applying the maximum
principle to $\tilde\var\circ\lim_{k\to\infty}F^{\la_k}$ in a sufficiently small neighborhood of $z_0$ (see \cite{Pi}).

Shrinking $\tilde V_0$ if necessary, there exists a biholomorphism $\tilde \Phi$ from a neighborhood of $\tilde V_0$ to its image, with $0 \in \tilde V_1:=\tilde \Phi(\tilde V_0)$, such that $\tilde \Phi(p) = 0$ and
$$
 \tilde r_0 ( \tilde \Phi^{-1}(z),\overline{ \tilde \Phi^{-1}(z)}) = \RE (z_n) + |z'|^2 + \varphi_0(z,\ov z)
$$
on $\tilde V_1$, where $|\varphi_0(z,\ov z)|=o(|z_n| +  |z'|^2)$ on $\tilde V_1$.

For sufficiently large $k$, we have $z^0 \in D^{\la_k}$ and there is a unique point $p_k \in \partial
(\tilde\Phi
 \tilde D^{\la_k}) \cap \tilde V_1$ such that
$$
\tilde \delta_k:=\dist(\tilde \Phi \circ F^{\la_k}(z^0),  p_k) = \dist(\tilde \Phi \circ F^{\la_k}(z^0), \partial
(\tilde\Phi
\tilde D^{\la_k})),
$$
where $\dist(x, S)$ denotes the Euclidean distance from $x$ to subset $S$ of $\cc^n$.

Here we have used Condition $(iv)$ in the proof of \rl{claim}, namely that $\tilde r_{\la_k}$ converges to $\tilde r_0$ in $C^2$ norm on $\ov{\tilde V_0}$, and the fact that $\tilde \Phi \circ F^{\la_k}(z^0) \in \tilde V_1$ for sufficiently large $k$.

Let, for sufficiently large $k$, $\tilde \Psi_k$ denote the composition of a translation and of a unitary map such that $\tilde \Psi_k(p_k) = 0$ and $\tilde \Psi_k \circ \tilde \Phi \circ F^{\la_k}(z^0) = (0',-\tilde \delta_k)$.
Note that up to a unitary transform of $\mathbb C^n$, we may choose $\tilde \Psi_k$ so that it converges to identity in $C^2$ norm on $\ov{\tilde V_1}$. Hence, restricting $\tilde V_1$ if necessary, we may assume that for every sufficiently large $k$:
\begin{equation}\label{la-eq}
r_k^*(z,\ov z):= \tilde r_{\la_k}\left(\tilde \Psi_k^{-1}(z),\overline{\tilde \Psi_k^{-1}(z)}\right) = \RE(z_n) + |q_k(z')|^2 + \psi_k(z,\ov z),
\end{equation}
on $\tilde \Psi_k(\tilde V_1)$, where
$\psi_k(z,\ov z)=o(|z_n| +  |z'|^2)$
on $\tilde \Psi_k(\tilde V_1)$ and
  $q_k(z')$
  is a quadratic polynomial that
  converges to $|z'|^2$
  as $k\to\infty$.
Here, we still have used Condition (iv) in the proof of \rl{claim}.

Now, let $\tilde \Lambda_{k}$ be the dilation map
$$
\tilde\Lambda_k : z \in \cc^n \mapsto \left(\sqrt{\tilde \delta_k}z',\tilde \delta_k z_n\right).
$$
Then the map $\tilde \Lambda_k ^{-1}\circ \tilde \Psi_k \circ \tilde \Phi$ is a biholomorphism from $\tilde V_0$ onto $\tilde \Lambda_k ^{-1} \circ \tilde \Psi_k (\tilde V_1)$.

It is straightforward that $\lim_{k \to \infty}\tilde \Lambda_k^{-1} \circ \tilde \Psi_k (\tilde V_1) = \cc^n$. Moreover, by (\ref{la-eq}) and the convergence of $\tilde\delta_k^{-1}r_k^*(\tilde\Lambda_k(z),\ov{\tilde\Lambda_k(z)})$   to $x_n+|z'|^2$ as $k\to\infty$ on any compact subset of $\cc^n$, the sequence of domains $\tilde \Lambda_k ^{-1} \circ \tilde \Psi_k(\tilde D^{\la_k} \cap \tilde V_1)$ converges to $\mathbb H_n$, for the local Hausdorff convergence of sets in $\cc^n$, i.e. for every ball $K$ in $\cc^n$, centered at  0, the sequences of domains $(\tilde \Lambda_k ^{-1} \circ \tilde \Psi_k(\tilde D^{\la_k} \cap \tilde V_1)))\cap K$ converges to $\mathbb H_n\cap K$ in the Hausdorff distance. Here $\mathbb H_n:=\{(z',z_n) \in \cc^n \colon \RE (z_n) + |z'|^2 < 0\}$ is the unbounded representation of the unit ball.

Since $F^{\la_k}$ converges to $p$ uniformly on compact subsets of $D^0$ then,
w.r.t.  the Hausdorff convergence of sets in $\cc^n$, the sequence $(F^{\la_k})^{-1}(\tilde D^{\la_k} \cap \tilde V_0))$,
which is contained in $D^{\la_k}$,
converges to $D^0$.

Let, for sufficiently large $k$, the map $F_k$ be defined by
$$
 F_k:=
\tilde \Lambda_k
 ^{-1}
  \circ \tilde \Psi_k \circ \tilde \Phi \circ F^{\la_k}.
$$
Since $F_k(z^0) = (0',-1)$ and the sequence $ F_k
  (D^{\la_k} \cap \tilde V_0)$ converges to $\mathbb H_n$, it is standard that the sequence $F_k$ is a normal family and admits a subsequence, still denoted by $F_k$, that converges to some holomorphic map $F_0$ from $D^0$ to $\mathbb H_n$, with $F_0(z^0) = (0',-1)$. See for instance \ci{Pi}.

Moreover, since the domains $D^{\la_k}$ are all contained in a bounded domain, the sequence $
F^{-1}_k
$ admits a subsequence, still denoted by $F^{-1}_k$, that converges to some holomorphic map $G_0$ from $\mathbb H_n$ to $D^0$, with $G_0((0',-1)) = z^0$.

Finally, from the equalities $F_k\circ F_k^{ - 1} = I$ on $
F_k( D^{\la_k} \cap \tilde V_0)
$ and $F_k^{-1} \circ F_k = I$ on $
  D
  ^{\la_k}\cap \tilde V_0$,
and $F_k(z^0)=(0',-1)$,
 we obtain 
 that $G_0 = F_0^{-1}$, meaning that $D^0$ is biholomorphic to the unit ball in $\cc^n$. 
This is a contradiction and the lemma is verified.
\end{proof}

\vspace{2mm}
We remark that the lemma holds if $D^0$ is not biholomorphic to a ball.
An immediate consequence of \rl{claim} is the following: For every $\e>0$, there exists $\la_0 > 0$   so that for every $0 \leq \la \leq \la_0$:
\eq{uniform-dist}
\dist(F^\la(z),\pd \tilde D^\la)>\del, \quad \text{if $z\in D^\la$ and $\dist(z,\pd D^\la)>\e$}.
\eeq
Furthermore, the same  estimate holds for $(F^\la)^{-1}$.

\vspace{1mm}
We need to strengthen the distance estimate \re{uniform-dist} by the following uniform version of the Hopf Lemma:

\begin{lemma}\label{hopf-lem}
There exist $\varepsilon > 0$, $C'>0$, and $\la_0>0$ such that for $0 \leq \lambda \leq \lambda_0$ and every $z_{\lambda} \in D^{\lambda}$,
$$
\rho_{\la}(z_{\lambda}) \leq
  C'd(z_{\lambda},\partial D^{\lambda})\sup\left\{{\rho_{\la}}(z')\colon z'\in D^\la,\dist(z',\pd D^\la)\geq\e\right\}
$$
holds for any negative plurisubharmonic function $\rho_{\la}$ in $D^\la$, which is continuous on $\overline{D}^{\la}$.
\end{lemma}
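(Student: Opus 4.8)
The plan is to derive the estimate from a \emph{uniform interior ball condition} for the family $\{D^\la\}$ together with the classical radial barrier, checking at each stage that the constants depend only on data that is uniform in $\la$. Strict pseudoconvexity will play no role beyond giving $C^2$ boundaries; only the subharmonicity of $\rho_\la$ is used.

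First I would extract the geometry. As in the proof of \rl{claim}, for $\la$ near $0$ one has defining functions $r_\la$ of $D^\la$ with $\|r_\la-r_0\|_{C^2}\to0$ and $|\nabla r_\la|$ bounded below near $\pd D^\la$; hence the principal curvatures of $\pd D^\la$ are uniformly bounded and the family satisfies a uniform interior ball condition: there are $\delta_0>0$ and $\la_0>0$ so that for every $\la\in[0,\la_0]$ and every $\zeta\in\pd D^\la$ some ball $B(c,\delta_0)\subset D^\la$ has $\zeta\in\pd B(c,\delta_0)$. In particular each $D^\la$ contains a ball of radius $\delta_0$, and since all the $D^\la$ lie in one fixed bounded set, $R:=\sup_{\la}\sup_{z\in D^\la}\dist(z,\pd D^\la)$ is finite. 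I then set $\e:=\delta_0/2$; the compact set $\{z\in D^\la:\dist(z,\pd D^\la)\geq\e\}$ is nonempty, so $M_\la:=\sup\{\rho_\la(z'):\dist(z',\pd D^\la)\geq\e\}$ is attained, and $M_\la<0$ because $\rho_\la$ is negative and continuous.

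Next I would run the barrier argument for $z$ close to $\pd D^\la$. Fix such a $z$, let $\zeta$ be its nearest boundary point and $B(c,\delta_0)$ the interior tangent ball there, so that $z$ lies on $[c,\zeta]$ with $|z-c|=\delta_0-d$, where $d:=\dist(z,\pd D^\la)\leq\delta_0/2$. On the annulus $\{\delta_0/2\leq|w-c|\leq\delta_0\}$ put
$$
\beta(w)=e^{-A\delta_0^2}-e^{-A|w-c|^2}.
$$
Regarding $\beta$ as a function on $\rr^{2n}$, a direct computation gives $\Delta\beta=-e^{-A|w-c|^2}(4A^2|w-c|^2-4nA)$, which is negative once $A>4n/\delta_0^2$; thus $\beta$ is superharmonic on the annulus, vanishes on the outer sphere, and equals $-\beta_1$ on the inner sphere for some $\beta_1>0$ depending only on $A,\delta_0$. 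Since $\rho_\la$ is plurisubharmonic, hence subharmonic on $\rr^{2n}$, the function $\rho_\la-\theta\beta$ is subharmonic for every $\theta>0$. Each point of the inner sphere has distance at least $\delta_0/2=\e$ to $\pd D^\la$, so $\rho_\la\leq M_\la$ there; choosing $\theta=|M_\la|/\beta_1$ makes $\rho_\la-\theta\beta\leq0$ on both bounding spheres, and the maximum principle gives $\rho_\la\leq\theta\beta$ throughout. An elementary estimate yields $\beta(z)\leq-c_0 d$ with $c_0=\tfrac{3}{2}A\delta_0 e^{-A\delta_0^2}$ uniform, whence
$$
\rho_\la(z)\leq\theta\beta(z)\leq-\frac{|M_\la|}{\beta_1}\,c_0\,d=\frac{c_0}{\beta_1}\,d\,M_\la.
$$
So the asserted inequality holds with the uniform constant $C_0':=c_0/\beta_1$ whenever $\dist(z,\pd D^\la)\leq\delta_0/2$.

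Finally I would dispose of the remaining points and reconcile the constants. If $\dist(z,\pd D^\la)>\delta_0/2=\e$, then $\rho_\la(z)\leq M_\la$ directly; as $\dist(z,\pd D^\la)\leq R$ and $M_\la<0$, the inequality $\rho_\la(z)\leq C'\dist(z,\pd D^\la)M_\la$ follows provided $C'\dist(z,\pd D^\la)\leq1$, which is guaranteed by $C'\leq1/R$. Conversely, since $\dist(z,\pd D^\la)M_\la<0$, the near-boundary bound of the previous step persists verbatim with any constant $C'\leq C_0'$. Hence $C':=\min\{C_0',1/R\}$ works for every $z_\la\in D^\la$, uniformly in $\la$. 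The single substantive point, and the only place where continuity of the family enters, is the uniform interior ball radius $\delta_0$: all of $A,\beta_1,c_0,R$, and therefore $C'$, are manufactured from $\delta_0$ and the uniform $C^2$ control of the $r_\la$, so the whole estimate is uniform in $\la$.
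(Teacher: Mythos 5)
Your proof is correct, and it reaches the uniform Hopf estimate by a genuinely different route from the paper's. The paper follows Forn\ae ss--Stens\o nes \cite{FoSt}: for each boundary point $p^{\la}$ it works inside the one-dimensional complex disc $\Delta_{p_{\e}^{\la}}$ of radius $\e$ in the complex normal line, internally tangent to $\pd D^{\la}$; it takes the harmonic extension $\varphi_{p^\la_{\e}}$ of $\rho_\la$ restricted to the boundary circle, passes to a holomorphic function $\tilde{\varphi}_{p^\la_{\e}}$ with $\RE(\tilde{\varphi}_{p^\la_{\e}})=\varphi_{p^\la_{\e}}$, and obtains the linear decay along the inner normal from the Schwarz lemma applied to a M\"obius normalization of $e^{\tilde{\varphi}_{p^\la_{\e}}}$; uniformity in $\la$ comes from a lower bound $\mu_{\e}$ on the measure of the part of the disc lying at distance at least $\e$ from $\pd D^\la$. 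You instead run the classical real-variable Hopf barrier: the uniform interior tangent ball $B(c,\delta_0)$ --- extracted from the same source as in the paper, namely the $C^2$ convergence of the defining functions in the proof of \rl{claim} --- the explicit superharmonic function $\beta(w)=e^{-A\delta_0^2}-e^{-A|w-c|^2}$ on the annulus $\{\delta_0/2\leq|w-c|\leq\delta_0\}$, and the maximum principle applied to the subharmonic function $\rho_\la-\theta\beta$ with $\theta=|M_\la|/\beta_1$. Your version is more elementary and self-contained: it uses only that plurisubharmonic functions are subharmonic on $\rr^{2n}$ (so it in fact proves the statement for all negative subharmonic functions), it avoids harmonic conjugates and the Schwarz lemma entirely, and every constant ($A$, $\beta_1$, $c_0$, $R$, hence $C'$) is an explicit function of $\delta_0$ and the uniform diameter bound, which makes the uniformity in $\la$ transparent. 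What the paper's route buys is alignment with \cite{FoSt}, which is also the source for the Kobayashi metric estimates of Lemma~\ref{kob-lem}, so the whole section can be read against a single reference with minimal adaptation. Two minor remarks: your constant $c_0=\tfrac{3}{2}A\delta_0 e^{-A\delta_0^2}$ is valid but not sharp, since $t\mapsto 2Ate^{-At^2}$ is decreasing on $[\delta_0/2,\delta_0]$ once $A>4n/\delta_0^2$, so one may take $c_0=2A\delta_0e^{-A\delta_0^2}$; and your explicit handling of the region $\dist(z,\pd D^\la)>\e$ via the requirement $C'\leq 1/R$ is exactly the bookkeeping the paper leaves implicit.
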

\begin{proof}
Here $\e$ is a fixed positive constant so that for any $p\in\pd D^\la$ there is a ball $B_p(p_{\e})$ of radius $\e$ centered at $p_{\e}$ so that $\ov {B_{\e}(p_\e)}\cap \pd D^\la=p$ and $B_{\e}(p_{\e})\subset D^\la$. The existence of such a constant is given by Condition (ii)  in the proof of \rl{claim}. We point out that the classical Hopf Lemma does not impose the continuity of the function up to the boundary. However, to avoid technical adjustments, we assume the functions to be continuous up to the boundaries  and this special case suffices our application of Lemma~\ref{hopf-lem}.

	The proof of Lemma~\ref{hopf-lem} is classical for one single domain and a given plurisubharmonic function $\rho$ (see, for instance, \cite{FoSt}*{p.~58}). The proof is reduced to the inequality $\rho(z) \leq  C'd(z,\partial D)$. We repeat the argument of \cite{FoSt} showing how to obtain a uniform bound on $\la$.

The uniformity with respect to $\lambda$ relies on the following fact. For $p \in \partial D$, where $D \subset \subset \mathbb C^n$ with $\partial D$ of class $  C^2$, we denote by $\vec{n}(p)$ the unit exterior normal vector to $\partial D$ at $p$.  Set $L(p):=p + \mathbb C \vec{n}(p)$ and, for $\varepsilon > 0$, we denote by $p_{\varepsilon}$ the unique point in $D \cap (p+\mathbb R \vec{n}(p))$ such that $\|p-p_{\varepsilon}\| = \varepsilon$. Then, from Condition (ii) in the proof of \rl{claim} and changing $\lambda_0$ if necessary, there exists $\e>0$ so that, for every $0\leq\la\leq\la_0$ and  every $p^{\lambda} \in \partial D^{\lambda}$,
\begin{equation}\label{eq3}
 \ov{\Delta_{p_{\e}^{\la}}}:=\{z \in L_{p_{\lambda}}\colon \|z-p_{\e}^{\lambda}\| \leq \e\} \subset D^{\lambda} \cup\{p^{\lambda}\}.
\end{equation}
\vspace{1mm}

The rest of the proof of Lemma~\ref{hopf-lem} follows now line by line the one for a single domain  (see \cite{FoSt}*{p.p.~57-59}), with the necessary adaptation using (\ref{eq3}). Indeed, let $\varphi_{p^\la_{\e}}$ be a harmonic extension of ${\rho_{\la}}|_{\partial  \Delta_{p_{\e}^{\la}}}$ and let $\tilde{\varphi}_{p^\la_{\e}}$ be a holomorphic function on $\Delta_{p_{\e}^{\la}}$ with
$$
\RE(\tilde{\varphi}_{p^\la_{\e}})=\varphi_{p^\la_{\e}}.
 $$
 Then  $\varphi_{p^\la_{\e}}(p^{\la}_{\e}) = \frac{1}{2\pi}\int_0^{2\pi}\rho_{\la}(p^{\la}_{\e}+\e e^{it})dt$.
Moreover, from the continuity of the family $\mathcal D$, we have for  each small and positive $\e$
$$
\inf_{0 \leq \la \leq \la_0}Area\left(D^{\la}_{\e}:=\{z_\la \in \Delta_{p^\la_\e}\colon \dist(z_\la,\partial D^{\la}) \geq \e\}\right) = \mu_{\e} > 0.
$$
Hence
\begin{equation}\label{unif-eq}
\varphi_{p^\la_{\e}}(p^{\la}_{\e}) \leq
\frac{\mu_{\e}}{ \pi\e^2} \sup_{D^{\la}_{\e}}\varphi_{p^\la_{\e}} =:-C_{\la,\e} <0,
\end{equation}
which implies that $|a_{\la,\e}| \leq e^{-C_{\la,\e}}$ for $a_{\la,\e}:= e^{\tilde{\varphi}_{p^\la_{\e}}(p^\la_\e)}$.

Moreover, following \cite{FoSt} and using the Schwarz Lemma, the map
$$
g_{p^\la_{\e}}: \Delta_{p_{\e}^{\la}}\ni z   \mapsto \frac{e^{\tilde{\varphi}_{p^\la_{\e}}(z)}-a_{\la,\e}}{1-\ov{a_{\la,\e}}e^{\tilde{\varphi}_{p^\la_{\e}}(z)}} \in \Delta
$$
satisfies the inequality:
$$
\forall\,  0 < \delta < \e,\ |g_{p^\la_{\e}}(p^\la_\e+\delta \vec{n}(p^\la))| \leq \frac{\delta}{\e}.
$$
We now want to estimate $\left| e^{\tilde{\varphi}_{p^\la_{\e}}(p^\la_{\e}+\delta \vec{n}(p^\la))} \right|$. It suffices to estimate the largest modulus $r$ of $w$ satisfying
$$
\left|\frac{w-a_{\la,\e}}{1-\ov{a_{\la,\e}}w}\right|\leq\f{\del}{\e}.
$$
We have
$$
r \leq\frac{\f{\delta}{\e}+ |a_{\la,\e}| }{1+\f{\delta}{\e}|a_{\la,\e}|}\leq  \frac{\f{\delta}{\e}+ e^{-C_{\la,\e}} }{1+\f{\delta}{\e}e^{-C_{\la,\e}}}=:\eta_{\la,\e}.
$$
By the mean-value-theorem, we obtain $\eta_{\la,\e}\leq1-\f{1-\e^{-2}\del^2}{4}C_{\la,\e}\leq 1-\f{1-\e^{-1}\del}{4}C_{\la,\e}$.
Hence
\aln
\forall\,  0 < \delta < \e,\ e^{\varphi_{p^\la_\e}(p^\la_{\e}+\delta \vec{n}(p^\la))}& =\left| e^{\tilde{\varphi}_{p^\la_{\e}}(p^\la_{\e}+\delta \vec{n}(p^\la))} \right| \leq1-\f{1-\e^{-1}\del}{4}C_{\la,\e}\\
&
= 1-\frac{C_{\la,\e}}{4\e}\dist\left(p^\la_\e+\delta \vec{n}(p^\la),\partial D^{\la}\right).
\end{align*}
This finally leads to the conclusion:
$$
\forall\,  0 < \delta < \e,\ \varphi_{p^\la_\e}(p^\la_{\e}+\delta \vec{n}(p^\la)) \leq -\frac{C_{\la,\e}}{4\e}\dist\left(p^\la_\e+\delta \vec{n}(p^\la),\partial D^{\la}\right).
$$
This completes the proof of Lemma~\ref{hopf-lem}, using Estimate (\ref{unif-eq}). \end{proof}

\vspace{2mm}
We recall that $r^\la$ (resp.~$\tilde r^\la$) converges to $r^0$ (resp.~$\tilde r^0$) when $\la$ tends to zero. Hence, applying \re{uniform-dist} and the Hopf lemma to $\tilde r^\la\circ F^\la$ and $r^\la\circ (F^\la)^{-1}$, we obtain the following: There are $\la_0>0$ and constants $C,C'$ independent of $\la$ so that $\forall\,  0 \leq \la \leq \la_0$:
\eq{exact-dist}
C'\dist(z^\la,\pd D^\la)\leq \dist(F^\la(z^\la),\pd\tilde D^\la)\leq C\dist(z^\la,\pd D^\la).
\eeq

\vspace{2mm}
The proof of Theorem~\ref{boundary} also relies on uniform estimates of the Kobayashi infinitesimal metric. If $M$ is a complex manifold, we denote by $k_M$ the Kobayashi infinitesimal (pseudo)metric on $M$. We recall that by definition,  for $z \in M,\ v \in T_zM$,
$$
 k_M(z,v) = \inf\{\alpha >0\mid \exists f: \Delta \xrightarrow{\rm hol.} M,\ f(0) = z,\ f'(0) = v/\alpha\}.
$$

We have the following:

\begin{lemma}\label{kob-lem}
There exist $c,\la_0$, with $0 < c < 1$ and $0 < \la_0 < 1$, satisfying the following.
\begin{itemize}
\item[(i)] $\ \forall\,  0 \leq \la \leq \la_0,\ \forall\,  (z_{\lambda},v) \in D^{\lambda} \times \mathbb C^n:$

$$
 \frac{c\|v\|}{\sqrt{\dist(z_{\lambda},\partial D^{\lambda})}} \leq k_{D^{\lambda}}(z_{\lambda},v) \leq \frac{\|v\|}{\dist(z_{\lambda},\partial D^{\lambda})}.
$$

\item[(ii)] $\ \forall\,  \ 0 \leq \la \leq \la_0,\ \forall\,  (\tilde z_{\lambda},v) \in \tilde{D}^{\lambda} \times \mathbb C^n:$
$$
\frac{c\|v\|}{\sqrt{\dist(\tilde z_{\lambda},\partial \tilde{D}^{\lambda})}} \leq k_{\tilde{D}^{\lambda}}(z_{\lambda},v) \leq \frac{\|v\|}{\dist(\tilde z_{\lambda},\partial \tilde{D}^{\lambda})}.
$$
\end{itemize}
\end{lemma}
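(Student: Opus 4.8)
The plan is to prove $(i)$; statement $(ii)$ is identical, applied to $\widetilde{\cL D}$ and $\tilde r^\la$ in place of $\cL D$ and $r^\la$. The upper bound is immediate and uses no pseudoconvexity: for $z_\la\in D^\la$ the Euclidean ball of radius $\dist(z_\la,\pd D^\la)$ about $z_\la$ lies in $D^\la$, so the affine disc $\zeta\mapsto z_\la+\zeta\,\dist(z_\la,\pd D^\la)\,v/\|v\|$ is a competitor in the definition of $k_{D^\la}$, giving $k_{D^\la}(z_\la,v)\le\|v\|/\dist(z_\la,\pd D^\la)$ with no dependence on $\la$.

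For the lower bound I would split $D^\la$ into an interior part and a boundary collar. Fix a collar radius $\rho>0$ (produced in the next step) and let $\epsilon\in(0,\rho)$. As established in the proof of \rl{claim}, $\ov{D^\la}\subset U_0\subset B_{R_0}$ for a fixed ball $B_{R_0}$, so monotonicity of the Kobayashi metric gives $k_{D^\la}(z,v)\ge k_{B_{R_0}}(z,v)\ge\|v\|/R_0=:c_0\|v\|$ for every $z\in D^\la$. On the interior region $\{z\in D^\la:\dist(z,\pd D^\la)\ge\epsilon\}$ this already yields $k_{D^\la}(z,v)\ge c\,\|v\|/\sqrt{\dist(z,\pd D^\la)}$ as soon as $c\le c_0\sqrt\epsilon$.

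The collar $\{\dist(z,\pd D^\la)<\epsilon\}$ is where the real work lies. Here I would exploit that $r_\la\to r_0$ in $C^2(\ov{U_0})$ with all $r_\la$ strictly plurisubharmonic, so that the complex Hessians of the $r_\la$ are bounded below by a fixed $\theta>0$ and $|\nabla r_\la|$ is bounded below near $\pd D^\la$, uniformly in $\la$. For $p\in\pd D^\la$ the uniformly controlled $2$-jet of $r_\la$ at $p$ yields a local holomorphic change of coordinates, with uniformly bounded derivatives and inverse, straightening $\pd D^\la$ near $p$ to the form $\RE w_n+|w'|^2+o(|w|^2)<0$; for a uniform radius $\rho$ this places $D^\la\cap B(p,\rho)$ inside a domain that is affinely biholomorphic, with uniform constants, to $\mathbb H_n$. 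I would then combine the classical model bound $k_{\mathbb H_n}(w,v)\ge c'\,\|v\|/\sqrt{\dist(w,\pd\mathbb H_n)}$, valid in all directions, with the localization principle $k_{D^\la}(z,v)\ge c''\,k_{D^\la\cap B(p,\rho)}(z,v)$ at a strictly pseudoconvex (local peak) boundary point (see \cite{FoSt} and the references therein), together with the bi-Lipschitz comparison $\dist(w,\pd\mathbb H_n)\asymp\dist(z,\pd D^\la)$ supplied by the straightening map, to obtain $k_{D^\la}(z,v)\ge c_1\,\|v\|/\sqrt{\dist(z,\pd D^\la)}$ on the collar.

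The main obstacle is not the existence of these estimates, which are classical for a single strictly pseudoconvex domain, but their uniformity in $\la$. The point is that the constants in the localization principle and in the model bound, and the size of the straightening neighborhood, depend only on the diameter, the $C^2$-norm of the defining function, the lower bound $\theta$ of its Levi form, and the lower bound on $|\nabla r_\la|$; and precisely these quantities are rendered uniform in $\la\in[0,\la_0]$ by Conditions $(i)$--$(iv)$ in the proof of \rl{claim}. Choosing $c=\min(c_0\sqrt\epsilon,c_1)$ and shrinking $\la_0$ if necessary then proves $(i)$, and $(ii)$ follows verbatim.
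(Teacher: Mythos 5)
Your upper bound coincides with the paper's, but your lower bound takes a genuinely different route. The paper (which attributes the single-domain result to Graham \cite{Gra}) follows the plurisubharmonic-function argument of \cite{FoSt}*{pp.~56--57}: it extracts exactly two uniform consequences of the $C^2$-convergence $r_\la\to r_0$ from the proof of \rl{claim} — namely that $z\mapsto r_\la(z)-\varepsilon\|z-q_\la\|^2$ is strictly plurisubharmonic for every $q_\la\in\pd D^\la$ and all small $\la$, and that $-r_\la(z)\leq c_1\dist(z,\pd D^\la)$ — and then, for a competitor disc $f$ with $f(0)=z_\la$, $f'(0)=v/\alpha$, applies the sub-mean-value inequality to the negative subharmonic function $\zeta\mapsto r_\la(f(\zeta))-\varepsilon\|f(\zeta)-q_\la\|^2$ together with Cauchy--Schwarz applied to $f'(0)=\f{1}{2\pi}\int_0^{2\pi}(f(e^{i\theta})-q_\la)e^{-i\theta}\,d\theta$, obtaining $\|v\|^2/\alpha^2\leq \varepsilon^{-1}(c_1+\varepsilon\,\diam U_0)\dist(z_\la,\pd D^\la)$ at \emph{every} point of $D^\la$ at once: no interior/collar split, no localization lemma, no boundary straightening. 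Your route — enclosing-ball monotonicity in the interior, plus localization, straightening and osculation by a copy of $\mathbb H_n$ in a collar — is the classical Graham-style geometric argument and can be completed, but it hides more uniformization than your closing paragraph acknowledges: both the absorption of the Taylor remainder $o(|w|^2)$ (needed to place the straightened domain inside a uniform copy of $\mathbb H_n$) and the constant in the localization principle (which comes from a local holomorphic peak function, e.g.\ built from the Levi polynomial) require a uniform \emph{modulus of continuity} of the second derivatives of $r_\la$, not merely the uniform $C^2$-bounds, Levi-form and gradient lower bounds you list; this extra input is in fact available here, by compactness, because $\la\mapsto r_\la$ is continuous into $C^2(\ov{U_0})$, but it is precisely the input the paper's argument manages to avoid. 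Two smaller points: your model estimate $k_{\mathbb H_n}(w,v)\geq c'\|v\|/\sqrt{\dist(w,\pd\mathbb H_n)}$ fails when $\dist(w,\pd \mathbb H_n)$ is large (already for the half-plane, where $k=\|v\|/(2\dist)$), so it must be invoked only where the distance is bounded — harmless, since you only use it in the collar; and the cleanest way to get your containment is to note that strict plurisubharmonicity of $r_\la$ makes the full complex Hessian positive definite, so after the Levi-polynomial change of coordinates the local piece of $D^\la$ lies inside a tangent ball of uniform radius, which in turn lies in an affine image of $\mathbb H_n$. In summary, both proofs are valid: the paper's is shorter and needs strictly weaker uniform data, while yours is more geometric and modular but obliges you to uniformize peak functions and Taylor remainders in the parameter.
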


\begin{proof}
The proof  is standard and is due to Graham~\cite{Gra}  in the case of a single domain. For the convenience of the reader, we give a sketch of the proof showing how to obtain uniformity.
\begin{itemize}
\item[--]The upper estimates only use the fact that for a point $p$ contained in a domain $D \subset \subset \mathbb C^n$, the Euclidean ball $B(p,\dist(p,\partial D))$ is contained in $D$. Thus no extra argument is needed for the parameter version.
\item[--]  For the lower estimates, we provide some details.
\end{itemize}

It follows from condition (ii) in the proof of \rl{claim} that there exist positive $ \la_0$ and  $\varepsilon > 0$ such that for every $0 \leq \lambda \leq \lambda_0$ and for every $q_{\lambda} \in \partial D^{\lambda}$:
\begin{equation}\label{eq1}
{\rm the \ map} \ z \in D^{\lambda} \mapsto r(z) - \varepsilon \|z-q_{\lambda}\|^2 \ {\rm is \ strictly \ plurisubharmonic.}
 \end{equation}

 \vspace{1mm}
Finally, still using (ii) and shrinking $\lambda_0$ if necessary, there exists $c_1 > 0$ such that for every $0 \leq \lambda \leq \lambda_0$ and for every $z_{\lambda} \in D^{\lambda}$ we have:

 \begin{equation}\label{eq2}
 r_{\lambda}(z_{\lambda}) \geq -c_1  \dist(z_{\lambda},\partial D^{\lambda}).
 \end{equation}

\vspace{1mm}
Now, the proof of the above-mentioned lower estimates for the Kobayashi infinitesimal metric, only using (\ref{eq1}), (\ref{eq2}), the mean-value inequality,  and H\"older's inequality,   is  standard (see, for instance, \cite{FoSt} p.p. 56-57). This proves Lemma~\ref{kob-lem}.
\end{proof}

\vspace{1mm}
As a direct consequence of Lemmas~\ref{hopf-lem}, \ref{kob-lem} and  Estimate \re{exact-dist}, we obtain (see \cite{FoSt}*{p.~61}):
\begin{prop}\label{hold-prop} There exist $0<\la_0<1$ and $C>1$ so that
$$
\forall\, 0 \leq \la \leq \la_0,\ \forall\,  z_{\lambda} \in D^{\lambda},\ \forall\,  v \in \mathbb C^n,\ \|d_{z_{\lambda}}F^{\lambda}(v)\| \leq C \frac{\|v\|}{\sqrt{\dist(z_{\lambda},\partial D^{\lambda})}}.
$$
\end{prop}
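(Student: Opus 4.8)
The plan is to combine the three ingredients just established---the uniform two-sided bounds on the Kobayashi metric from Lemma~\ref{kob-lem} and the two-sided boundary-distance comparison \re{exact-dist}---through the basic fact that the Kobayashi metric is invariant under biholomorphisms. Since each $F^\la$ is a biholomorphism from $D^\la$ onto $\tilde D^\la$, applying the distance-decreasing property to both $F^\la$ and $(F^\la)^{-1}$ gives the exact identity
$$
k_{\tilde D^\la}\bigl(F^\la(z_\la),\, d_{z_\la}F^\la(v)\bigr)=k_{D^\la}(z_\la,v)
$$
for every $z_\la\in D^\la$ and $v\in\cc^n$.

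First I would bound the right-hand side from above by the upper estimate in Lemma~\ref{kob-lem}(i), namely $k_{D^\la}(z_\la,v)\leq \|v\|/\dist(z_\la,\pd D^\la)$. Next I would bound the left-hand side from below by the lower estimate in Lemma~\ref{kob-lem}(ii), applied at the image point $\tilde z_\la:=F^\la(z_\la)\in\tilde D^\la$:
$$
\f{c\,\|d_{z_\la}F^\la(v)\|}{\sqrt{\dist\bigl(F^\la(z_\la),\pd\tilde D^\la\bigr)}}\leq k_{\tilde D^\la}\bigl(F^\la(z_\la),\,d_{z_\la}F^\la(v)\bigr).
$$
Chaining these two inequalities through the equality above isolates $\|d_{z_\la}F^\la(v)\|$ in terms of $\|v\|$, the source boundary distance $\dist(z_\la,\pd D^\la)$, and the square root of the target boundary distance $\dist(F^\la(z_\la),\pd\tilde D^\la)$.

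The final step is to eliminate the target boundary distance using the upper half of \re{exact-dist}, that is $\dist(F^\la(z_\la),\pd\tilde D^\la)\leq C\,\dist(z_\la,\pd D^\la)$; taking square roots contributes a factor $\sqrt{C}\,\sqrt{\dist(z_\la,\pd D^\la)}$, which cancels exactly one power of $\dist(z_\la,\pd D^\la)$ in the denominator and leaves the asserted bound with constant $\sqrt{C}/c$. I would set $\la_0$ to be the minimum of the parameter thresholds appearing in Lemma~\ref{kob-lem} and in \re{exact-dist}, so that all the estimates hold simultaneously.

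Since every input is already uniform in $\la$, I do not expect a genuine obstacle here: the argument is essentially bookkeeping of constants. The only points meriting a moment's care are checking that $F^\la(z_\la)$ really lies in $\tilde D^\la$, so that Lemma~\ref{kob-lem}(ii) may be invoked there, and that a single $\la_0$ serves all the estimates at once. Note that the Hopf lemma (Lemma~\ref{hopf-lem}) is not applied again at this stage; it has already done its work in producing \re{exact-dist}.
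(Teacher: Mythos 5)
Your proposal is correct and is precisely the argument the paper intends: the paper states Proposition~\ref{hold-prop} as a direct consequence of Lemma~\ref{kob-lem} and Estimate \re{exact-dist} (citing Forn\ae ss--Stens\o nes), and your chain---biholomorphic invariance of the Kobayashi metric, the upper bound on $k_{D^\la}$ at the source, the lower bound on $k_{\tilde D^\la}$ at the image point, and then the upper half of \re{exact-dist} to convert $\dist(F^\la(z_\la),\pd\tilde D^\la)$ into $\dist(z_\la,\pd D^\la)$---is exactly that omitted computation, yielding the constant $\sqrt{C}/c$. Your closing remark is also accurate: the Hopf lemma enters only through \re{exact-dist}, so it need not be invoked again here.
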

Finally, using the Hardy-Littlewood Lemma (see \cite{FoSt}*{p.p.~62-63}), we have:
\begin{prop}\label{ext-prop}
Under the assumptions of \rta{boundary}, there exist positive constants $ \la_0$ and  $C$ such that for all $0 \leq \lambda \leq \lambda_0$ and  $z_{\lambda},w_{\lambda} \in D^{\lambda}:$
$$
\|F^{\lambda}(z_{\lambda})-F^{\lambda}(w_{\lambda})\| \leq C \|z_{\lambda}-w_{\lambda}\|^{1/2}.
$$
\end{prop}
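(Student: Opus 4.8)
\emph{Proof proposal.} The plan is to obtain the uniform H\"older-$\f12$ estimate directly from the differential bound of \rp{hold-prop}, by invoking the Hardy--Littlewood lemma and checking that every constant that enters can be chosen independently of $\la$. Recall that \rp{hold-prop} furnishes a single constant $C$, \emph{independent of} $\la$, such that
\[
\|d_{z_\la}F^\la(v)\|\leq C\,\f{\|v\|}{\sqrt{\dist(z_\la,\pd D^\la)}},\qquad z_\la\in D^\la,\ v\in\cc^n.
\]
Equivalently, each component of the map $F^\la$ has gradient bounded by $C\,\dist(\,\cdot\,,\pd D^\la)^{\all-1}$ with $\all=\f12$, which is precisely the hypothesis under which the Hardy--Littlewood lemma produces a H\"older-$\all$ bound up to the boundary.

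For each fixed $\la$ the Hardy--Littlewood lemma then yields
\[
\|F^\la(z_\la)-F^\la(w_\la)\|\leq C_\la\,C\,\|z_\la-w_\la\|^{1/2},\qquad z_\la,w_\la\in \ov{D^\la},
\]
where the geometric constant $C_\la$ depends only on the domain $D^\la$. The mechanism is the usual one: one joins $z_\la$ to $w_\la$ and integrates the differential along the path, estimating $\int\dist(\,\cdot\,,\pd D^\la)^{-1/2}$. When the two points are comparable in mutual distance and in distance to $\pd D^\la$, one integrates along the segment; otherwise both points lie within $O(\|z_\la-w_\la\|)$ of the boundary and one routes through boundary points, using an interior cone at each boundary point to bound the singular integral. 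The resulting $C_\la$ depends only on the diameter of $D^\la$ and on the opening and height of an interior cone.

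The one delicate point---and the main obstacle---is the uniformity of $C_\la$ in $\la$, and this is exactly where the hypotheses on the family are used. Since $\{D^\la\}$ is a continuous family of bounded domains with $C^2$ boundary whose defining functions $r_\la$ converge to $r_0$ in $C^2$ norm (Condition (ii) in the proof of \rl{claim}), the domains have uniformly bounded diameter and satisfy a \emph{uniform} interior ball condition: there is $\e>0$, independent of $\la$, so that every point of $\pd D^\la$ admits an interior ball of radius $\e$ meeting $\pd D^\la$ only at that point. This is the very condition \re{eq3} already exploited in \rl{hopf-lem}, and a uniform interior ball condition in particular gives a uniform interior cone. Feeding this $\la$-independent geometry into the Hardy--Littlewood argument bounds $C_\la\leq C'$ with $C'$ independent of $\la$, after shrinking $\la_0$ if necessary. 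Together with the uniform constant of \rp{hold-prop}, this produces a single $C$ for which the asserted inequality holds for all $0\leq\la\leq\la_0$.
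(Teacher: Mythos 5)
Your proposal is correct and takes essentially the same route as the paper: the paper likewise deduces the statement directly from the uniform derivative estimate of Proposition~\ref{hold-prop} via the Hardy--Littlewood lemma (citing Forn\ae ss--Stens\o nes, pp.~62--63), with the uniformity in $\la$ resting on the $C^2$ convergence of the defining functions. Your explicit verification that the geometric (diameter and interior ball/cone) constants can be chosen independently of $\la$ is exactly the implicit content of the paper's one-line proof.
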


\medskip

\begin{proof}[{\bf  Proof of \rta{boundary}}] The proof of Theorem~\ref{boundary} is now a direct consequence of Proposition~\ref{ext-prop} and
\rl{claim}.
\end{proof}

Our main positive result, motivated by the counter-examples, allows us to localize the study of regularity of families of biholomorphisms.
\begin{cor}\label{cor47}
Under the assumptions of \rta{boundary}, if $\cL D$, $\widetilde{\cL D}$ are continuous families of $\mathcal C^{\infty}$ bounded strictly pseudoconvex domains in   $\cc^n$ with $n\geq2$, then the map $x \mapsto F^\la\circ \Gaa^\la(x)-F^0\circ\Gaa^0(x)$ and its partial derivatives in $x$ of any order  converge uniformly to $0$ in $\ov D$ as $\la\to0$ for any parameterization of $\cL D$
with $\Gaa\in C^{\infty,0}(\ov{\cL D})$.
\end{cor}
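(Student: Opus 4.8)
The plan is to upgrade the $C^0(\ov D)$ convergence already supplied by \rt{boundary} to convergence in $C^k(\ov D)$ for every $k$, by combining it with a uniform $C^{k+1}(\ov D)$ bound and a compactness argument. Throughout we work with $\la$ near $0$. Since the families are $C^\infty$ and $\Gaa\in C^{\infty,0}(\ov{\cL D})$, the embeddings $\Gaa^\la$ have uniformly bounded $x$-derivatives of every order and converge to $\Gaa^0$ in each $C^k(\ov D)$, with uniformly nondegenerate differentials for $\la$ small; the same hypothesis strengthens condition (ii) in the proof of \rl{claim} to $r_\la\to r_0$ and $\tilde r_\la\to\tilde r_0$ in $C^k$ for every $k$. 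This is the uniform data on which the boundary estimates below will rest.

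First I would dispose of the interior. By \rl{claim} the maps $F^\la$ converge to $F^0$ uniformly on compact subsets of $D^0$, and the Cauchy inequalities then promote this to $\pd_x^\beta F^\la\to\pd_x^\beta F^0$ uniformly on every compact subset of the interior, for each multi-index $\beta$. Thus the only remaining difficulty is uniform control of the derivatives up to $\pd\cL D$.

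The heart of the matter, and the step I expect to be the main obstacle, is a uniform version of Fefferman's boundary regularity theorem: for each $k$ there should exist a constant $M_k$ with
$$
\sup_{0\leq\la\leq\la_0}\|F^\la\circ\Gaa^\la\|_{C^k(\ov D)}\leq M_k.
$$
Fefferman's theorem~\ci{Fe74} yields $F^\la\in C^\infty(\ov{D^\la})$ for each fixed $\la$; what must be verified is that the resulting estimates are uniform in the parameter. I would carry this out by running the simplified proof of Lempert~\ci{Le81} and tracking the dependence of every constant: the stationary (extremal) discs, the associated boundary Riemann map, and the attached estimates depend on the domain only through the uniform strict pseudoconvexity constant, the diameter, and finitely many $C^k$ norms of the defining function. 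Because $r_\la\to r_0$, $\tilde r_\la\to\tilde r_0$ in $C^\infty$ and the family is uniformly strictly pseudoconvex for $\la$ near $0$, these quantities may be chosen independent of $\la$, whence $M_k$. (Equivalently one may invoke the subelliptic estimates for the $\db$-Neumann problem, whose constants likewise depend only on the uniform data, to obtain Condition~R with uniform bounds and then transform through the Bergman kernel as in the Bell--Ligocka argument.)

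Finally I would assemble the pieces. Fix $k$. By the uniform bound the family $\{F^\la\circ\Gaa^\la\}$ is bounded in $C^{k+1}(\ov D)$, so its $k$-th order $x$-derivatives are uniformly bounded and, since $\pd D$ is smooth, equicontinuous on $\ov D$; hence the family is precompact in $C^k(\ov D)$ by Arzel\`a--Ascoli. By \rt{boundary} it converges in $C^0(\ov D)$ to $F^0\circ\Gaa^0$, so $F^0\circ\Gaa^0$ is the unique possible $C^k$ limit point, and therefore $F^\la\circ\Gaa^\la\to F^0\circ\Gaa^0$ in $C^k(\ov D)$ as $\la\to0$. Since $k$ is arbitrary, the map $x\mapsto F^\la\circ\Gaa^\la(x)-F^0\circ\Gaa^0(x)$ together with all its $x$-derivatives converges uniformly to $0$ on $\ov D$, which is precisely the asserted $C^{\infty,0}$ convergence.
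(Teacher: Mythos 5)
Your overall architecture---interior convergence from \rl{claim} and the Cauchy estimates, a uniform-in-$\la$ bound $\sup_\la\|F^\la\circ\Gaa^\la\|_{C^{k+1}(\ov D)}\le M_{k+1}$, then Arzel\`a--Ascoli in $C^k(\ov D)$ with the limit identified by the $C^0$ convergence of \rt{boundary}---is logically sound: granted the uniform bound, the assembly in your final paragraph does prove the corollary. The genuine gap is the uniform bound itself. It is not a side lemma that can be quoted or deferred; uniformity in $\la$ of the Fefferman estimates is precisely the content of the corollary, and your justification for it (``the constants depend only on the strict pseudoconvexity constant, the diameter, and finitely many $C^k$ norms of the defining function'') is an assertion of the result, not an argument. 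Moreover, the plan to obtain it by ``running Lempert's proof'' on $D^\la$ and $\tilde D^\la$ does not parse as stated: the machinery of stationary/extremal discs and of the circular representation in~\cite{Le81} is a theory for strictly \emph{convex} domains, and a strictly pseudoconvex domain is in general not biholomorphic to a convex one. One must first localize at a boundary point, insert auxiliary strictly convex domains, and show that this auxiliary geometry is respected by $F^\la$ uniformly in $\la$; that localization is where all the work lies. Your parenthetical alternative ($\db$-Neumann subelliptic estimates, uniform Condition R, Bell--Ligocka) has the same defect: besides uniform Condition R it requires uniform non-degeneracy of $\det (F^\la)'$ up to the boundary, i.e.\ a parameter version of Fefferman's Bergman-kernel asymptotics or of boundary Hopf-type estimates, none of which you establish.

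For contrast, the paper proves no uniform a priori bound and uses no compactness. It fixes $p_0\in\pd D^0$, chooses a strictly convex $C^\infty$ domain $\Om^0\subset D^0$ with $p_0\in\pd\Om^0\cap\pd D^0$ and strictly convex domains $\om_1^\la\subset F^\la(\Om^\la)$ touching $\pd\tilde D^\la$ at $F^\la(p_0^\la)$, and then uses the uniqueness of Kobayashi extremal discs in strictly convex domains---together with \rt{boundary}, which is what guarantees $F^0(\gaa_0)\subset\om_1^0\cup(\pd\om_1^0\cap\pd\tilde D^0)$ for a stationary disc $\gaa_0$ through $p_0$---to obtain the exact local factorization $F^\la=R_1^\la\circ(R^\la)^{-1}$, where $R^\la$, $R_1^\la$ are Lempert's circular representations of $\Om^\la$, $\om_1^\la$. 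The smoothness of these maps up to the boundary and their continuous dependence on $\la$ are then supplied by Proposition 10 of~\cite{Le81}, applied to the convex domains where that result actually holds. If you wish to keep your compactness scheme, this localization-and-factorization step (or a precise citation of a Greene--Krantz-type stability theorem) is what must replace your second step.
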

\begin{proof}
Fix $p_0\in\pd D^0$. Let $q_0=F^0(p_0) \in   \partial \tilde D^0$. Without loss of generality we may assume that there is a strictly convex $C^\infty$ domain $\Om_0\subset D^0$  so that $p_0\in\pd\Om^0\cap\pd D^0$. Let $\Om^\la=\Gaa^\la\circ(\Gaa^0)^{-1}(\Om^0)$. Since $\Gaa$ is of class $C^{\infty,0}$, we may further assume that all $\Om^\la$ are strictly convex with $C^\infty$ boundary when $\la$ is sufficiently small. Since $\{(F^\la)^{-1}\}$ is also in $C^{0,0}(\ov D\times\{0\})$, we can find strictly convex domains $\om_1^\la$  contained in $F^\la(\Om^\la)$ so that  $F^\la(p^\la_0)\in\pd\om_1^\la\cap   \partial \tilde D^\la$. Here   we chose a strongly convex domain $\om_1^0$ contained in $F^0(\Om^0)$ and such that $F^0(p_0)\in\pd\om_1^0 \cap \partial \tilde D^0$. Then we set  $p_0^\la=\Gaa^\la\circ(\Gaa^0)^{-1}(p_0)$ and  $\om_1^\la=\tilde\Gaa^\la\circ(\tilde\Gaa^0)^{-1}(\om_1^0)$.

According to a result of
Lempert~\ci{Le81}*{p.p. 467-468},
we can find a stationary disc $\gaa_0$ for the strictly convex domain $\om^0$ such that $\pd\gaa_0\subset\pd D^0\cap\pd\om^0$ and $p_0\in\gaa_0$. Furthermore,  we may choose $\gaa_0$ that is contained in any fixed small neighborhood of $p_0$. Since $F\in C^{0,0}(\ov D\times\{0\})$,
then
\eq{fg0}
F^0(\gaa_0)\subset{\om_1^0}\cup (\pd\om_1^0\cap\pd \tilde D^0).
\eeq
 Since $F^0(\gaa_0)\setminus\pd F^0(\gaa_0)$ is a Kobayashi extremal disc in $F^0(\Om^0)$, then by \re{fg0}, it is also a Kobayashi extremal disc in $\om_1^0$ by the distance decreasing property of the inclusion map.
 Fix $c^0\in\gaa_0$. Let $c^\la=c^0$ and $\tilde c^\la=F^\la(c^\la)$. There exist  Riemann mappings
 $$
 R^\la\colon   \mathbb B^n  \to \Om^\la,\quad R_1^\la\colon   \mathbb B^n  \to \om_1^\la
 $$
 satisfying $R^\la(0)=c^\la$ and $R_1^\la(0)=\tilde c^\la$. Here  $\mathbb B^n$ denotes the unit ball in $\mathbb C^n$.  Without loss of generality, we may assume that $F^\la$ is tangent to the identity at $c^\la$ for every $\la$.
 By the uniqueness of Kobayashi extremal discs for bounded strictly convex $C^2$  domains, we have
 $$
 (R_1^0)^{-1}F^0 (\gaa_0)=(R^0)^{-1}\gaa_0.
 $$
 By the continuous dependence of stationary discs, we have,   for every $\la$,
 \eq{R1La}
 (R_1^\la)^{-1}F^\la (\gaa_0^\la)=(R^\la)^{-1}(\gaa_0^\la),
 \eeq
 where each $\gaa^\la_0$ is a stationary curve of $\Om^\la$ tangent to $\gaa_0^0$ at $c^\la$.  We can replace  $\gaa_0$  by any stationary disc  $\tilde \gaa_0$ such that the tangent line of $\tilde\gaa_0$ is a small perturbation of the tangent line of $\gaa_0$ at $c^0$, while \re{R1La} remains valid. This shows that, near $p_0^\la$,
 \begin{equation*}
 F^\la=R_1^\la\circ (R^\la)^{-1}.
 \end{equation*}
  Using Proposition 10 of \ci{Le81}, we know that each $R^\la$ is in $C^\infty(\ov{\mathbb B^n})$ and also $R^\la\colon\ov{\mathbb B^n}\to\ov{ D^\la}$ depends on $\la$ continuously. This shows that $\{F^\la\circ\Gaa^\la\}$ is in $C^{\infty,0}(\ov{\cL D})$. \qedhere
\end{proof}

As another application of \rp{hold-prop} and \rp{ext-prop} and the above proof, we have the following extension result.
\begin{cor}\label{cor48}
Let $n\geq2$. Let $\cL D,\widetilde{\cL D}$ be $C^\infty$ families of $C^{\infty}$ $($resp.~$C^\om)$ bounded strictly pseudoconvex domains in $\cc^n$. Let $F$ be a family of biholomorphic mappings $F^{\la}$ from $D^\la$ to $\tilde D^\la$. Suppose that $F\in C^{\infty,\infty}(\cL D)$ $($resp.~$C^\om)$. Then
$F\in C^{\infty,\infty}(\ov{\cL D})$ $($resp.~$C^{\om,\om}(\ov{\cL D}))$.
\end{cor}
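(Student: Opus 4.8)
The plan is to localize near $\pd\cL D$ and to strengthen, from continuous to $C^\infty$ (resp.~$C^\om$), the dependence on the parameter in the factorization of $F^\la$ produced in the proof of \nrc{cor47}. Since the interior regularity $F\in C^{\infty,\infty}(\cL D)$ (resp.~$C^{\om,\om}(\cL D)$) is assumed and the conclusion is local, it suffices to prove that $F$ is of class $C^{\infty,\infty}$ (resp.~$C^{\om,\om}$) up to the boundary near each point $(p_0,\la_0)$ with $\la_0\in[0,1]$ and $p_0\in\pd D^{\la_0}$, and then to patch over a finite cover of $\pd D^{\la_0}$ and of the compact parameter interval. Observe first that the hypothesis $F\in C^{\infty,\infty}(\cL D)$ supplies directly the interior convergence of $F^\la$ that \rl{claim} had extracted from rigidity by scaling; since this was the only use of rigidity in the proof of \rt{boundary}, the distance estimate \re{exact-dist} and the bounds of Propositions~\ref{hold-prop} and~\ref{ext-prop} remain valid in the present setting and guarantee that $F^\la$ extends continuously to $\ov{D^\la}$, uniformly for $\la$ near $\la_0$, so that the boundary configuration used below is stable under small perturbations of $\la$.

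Next I would reproduce the construction from the proof of \nrc{cor47}: choose a strictly convex $C^\infty$ (resp.~$C^\om$) domain $\Om^0\subset D^{\la_0}$ with $p_0\in\pd\Om^0\cap\pd D^{\la_0}$, transport it by the parameterizations to $\Om^\la=\Gaa^\la\circ(\Gaa^{\la_0})^{-1}(\Om^0)$ and to the analogous convex pieces $\om_1^\la=\tilde\Gaa^\la\circ(\tilde\Gaa^{\la_0})^{-1}(\om_1^0)$ on the target side, and obtain, near $p_0^\la$, the factorization
\[
F^\la=R_1^\la\circ(R^\la)^{-1},
\]
where $R^\la\colon\mathbb B^n\to\Om^\la$ and $R_1^\la\colon\mathbb B^n\to\om_1^\la$ are Lempert's extremal mappings normalized by $R^\la(0)=c^\la$, $R_1^\la(0)=\tilde c^\la=F^\la(c^\la)$, and tangent to the identity at the fixed interior point $c^\la=c^{\la_0}$. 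The decisive difference with \nrc{cor47} is that all the data entering this factorization now vary with full regularity in $\la$: because $\cL D,\widetilde{\cL D}$ are $C^\infty$ (resp.~$C^\om$) families the parameterizations $\Gaa,\tilde\Gaa$ are of class $C^{\infty,\infty}$ (resp.~$C^{\om,\om}$), so $\Om^\la$ and $\om_1^\la$ depend smoothly (resp.~analytically) on $\la$; and because $c^{\la_0}$ is an interior point and $F\in C^{\infty,\infty}(\cL D)$ (resp.~$C^{\om,\om}(\cL D)$), the base point $\tilde c^\la$ and the admissible tangent directions of the stationary discs depend on $\la$ with the same regularity.

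For the $C^\infty$ statement I would then invoke the smooth dependence, in Lempert's solution of the extremal problem for strictly convex domains, of the stationary discs and of the extremal map on their defining data, in the boundary-regular form of \ci{Le81}*{Prop.~10}. As these data are of class $C^{\infty,\infty}$ in $(z,\la)$, one obtains $R$ and $R_1$ of class $C^{\infty,\infty}$ jointly in $(z,\la)$ up to $\pd\mathbb B^n$, and composition gives $F=R_1\circ R^{-1}$ of class $C^{\infty,\infty}$ up to the boundary near $(p_0,\la_0)$. Together with the assumed interior regularity and the finite cover, this establishes $F\in C^{\infty,\infty}(\ov{\cL D})$.

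For the $C^\om$ statement I would first apply the $C^\infty$ conclusion, so that $F^\la$ restricts to a $C^\infty$ CR diffeomorphism of $\pd D^\la$ onto $\pd\tilde D^\la$ between real-analytic hypersurfaces varying real-analytically in $\la$, and then run the Lewy--Pinchuk reflection principle with $\la$ adjoined as an extra real-analytic parameter: since $F$ is holomorphic in $z$ and real-analytic in $\la$ in the interior, it continues to a map holomorphic in $z$ and in a complexification $\zeta$ of $\la$, and performing the reflection across $\pd D^\la$ in the $z$-variables with $\zeta$ as a holomorphic parameter yields a real-analytic extension of $F$ across $\pd\cL D$ jointly in all variables, whence $F\in C^{\om,\om}(\ov{\cL D})$. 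The main obstacle is precisely this upgrade of the parameter dependence from the merely continuous dependence used in \nrc{cor47} to $C^\infty$ (resp.~$C^\om$): one must verify that Lempert's extremal discs depend on their data with the full joint regularity, uniformly up to $\pd\mathbb B^n$, and, in the analytic case, that the reflection can be carried out with the real parameter $\la$ complexified compatibly with the interior analyticity of $F$.
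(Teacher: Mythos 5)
Your proposal takes essentially the same route the paper intends: the paper gives no detailed proof of this corollary, stating only that it is ``another application'' of Propositions~\ref{hold-prop} and~\ref{ext-prop} and of the proof of Corollary~\ref{cor47}, with the introduction indicating Lempert's method for the $C^{\infty,\infty}$ case and the Lewy--Pinchuk reflection for the $C^{\om,\om}$ case. Your write-up --- substituting the interior-regularity hypothesis for rigidity to recover the uniform estimates and the factorization $F^\la=R_1^\la\circ(R^\la)^{-1}$, upgrading the parameter dependence in Lempert's theory for the smooth case, and then reflecting with $\la$ complexified in the analytic case --- is exactly that sketch, carried out in more detail than the paper itself provides.
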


\newcommand{\doi}[1]{\href{http://dx.doi.org/#1}{#1}}
\newcommand{\arxiv}[1]{\href{https://arxiv.org/pdf/#1}{arXiv:#1}}

  \def\MR#1{\relax\ifhmode\unskip\spacefactor3000 \space\fi%
  \href{http://www.ams.org/mathscinet-getitem?mr=#1}{MR#1}}

 \begin{bibdiv}
\begin{biblist}

\bib{Ah78}{book}{
   author={Ahlfors, L.V.},
   title={Complex analysis},
   edition={3},
   note={
   International Series in Pure and Applied Mathematics},
   publisher={McGraw-Hill Book Co., New York},
   date={1978},
   pages={xi+331},
}

\bib{BG14}{article}{
   author={Bertrand, F.},
   author={Gong, X.},
   title={Dirichlet and Neumann problems for planar domains with parameter},
   journal={Trans. Amer. Math. Soc.},
   volume={366},
   date={2014},
   number={1},
   pages={159--217},
   review={\MR{3118395}},
   doi={\doi{10.1090/S0002-9947-2013-05951-X}},
}

\bib{BGR14
}{article}{
   author={Bertrand, F.},
   author={Gong, X.},
   author={Rosay, J.-P.},
   title={Common boundary values of holomorphic functions for two-sided
   complex structures},
   journal={Michigan Math. J.},
   volume={63},
   date={2014},
   number={2},
   pages={293--332},
   review={\MR{3215549}},
   doi={\doi{10.1307/mmj/1401973052}},
}

\bib{BSW78}{article}{
   author={Burns, D., Jr.},
   author={Shnider, S.},
   author={Wells, R. O., Jr.},
   title={Deformations of strictly pseudoconvex domains},
   journal={Invent. Math.},
   volume={46},
   date={1978},
   number={3},
   pages={237--253},
   issn={0020-9910},
   review={\MR{0481119 (58 \#1265)}},
}

\bib{CM74}{article}{
   author={Chern, S.S.},
   author={Moser, J.K.},
   title={Real hypersurfaces in complex manifolds},
   journal={Acta Math.},
   volume={133},
   date={1974},
   pages={219--271},
   review={\MR{0425155}},
   doi={\doi{10.1007/BF02392146}},
}

\bib{CGS08}{article}{
   author={Coupet, B.},
   author={Gaussier, H.},
   author={Sukhov, A.},
   title={Some aspects of analysis on almost complex manifolds with
   boundary},
   journal={J. Math. Sci. (N.Y.)},
   volume={154},
   date={2008},
   number={6},
   pages={923--986},
   issn={1072-3374},
   review={\MR{2731964}},
   doi={\doi{10.1007/s10958-008-9202-4}},
}

\bib{Co50}{book}{
   author={Courant, R.},
   title={Dirichlet's principle, conformal mapping, and minimal surfaces},
   note={With an appendix by M. Schiffer;
   Reprint of the 1950 original},
   publisher={Springer-Verlag, New York-Heidelberg},
   date={1977},
   review={\MR{0454858}},
}

\bib{Di-Pi95}{article}{
author={Diederich,K.},
author={Pinchuk,S.},
title={Proper holomorphic maps in dimension 2 extend},
journal={Indiana Univ. Math. J.},
volume={44},
date={1995},
number={4},
pages={1089--1126},
}

\bib{Fe74}{article}{
   author={Fefferman, Ch.},
   title={The Bergman kernel and biholomorphic mappings of pseudoconvex
   domains},
   journal={Invent. Math.},
   volume={26},
   date={1974},
   pages={1--65},
   review={\MR{0350069}},
   doi={\doi{10.1007/BF01406845}},
}

\bib{FoSt}{book}{
author={Forn\ae ss, J.E.},
   author={Stens\o nes, B.},
   title={Lectures on counterexamples in several complex variables},
   note={Mathematical Notes},
   volume={33},
   publisher={Princeton University Press},
   date={1987; 2nd edition 2007},
   }

\bib{Fors93}{article}{
author={Forstneri\v c,F.},
title={Proper holomorphic mappings:  a survey. In:  Several complex variables,  Mittag-Leffler
Institute, Stockholm 1987/88},
note={Mathematical Notes},
volume={38},
publisher={Princeton University Press},
date={1993},
pages={297--363}
}

\bib{Go17}{article}{
author={Gong, X.},
title={A Frobenius-Nirenberg theorem with parameter},
note={preprint, submitted},
eprint = {https://arxiv.org/abs/arXiv:1611.03939},
}

\bib{Gra}{article}{
author={Graham, I.},
title={Boundary  behavior  of  the  Carath\'eodory  and  Kobayashi  metrics  on
strongly  pseudocovnex  domains  in $\cc^n$
with  smooth  boundary},
journal={Trans. Amer. Math. Soc.},
volume={207},
date={1975},
pages={219--240},
}

\bib{GK84}{article}{
   author={Greene, R.E.},
   author={Krantz, S.G.},
   title={Stability of the Carath\'eodory and Kobayashi metrics and
   applications to biholomorphic mappings},
   conference={
      title={Complex analysis of several variables},
      address={Madison, Wis.},
      date={1982},
   },
   book={
      series={Proc. Sympos. Pure Math.},
      volume={41},
      publisher={Amer. Math. Soc., Providence, RI},
   },
   date={1984},
   pages={77--93},
   review={\MR{740874}},
   doi={\doi{10.1090/pspum/041/740874}},
}

\bib{Gr78}{book}{
   author={Grunsky, H.},
   title={Lectures on theory of functions in multiply connected domains},
   note={Studia Mathematica, Skript 4},
   publisher={Vandenhoeck \& Ruprecht, G\"ottingen},
   date={1978},
   pages={253},
   isbn={3-525-40142-6},
   review={\MR{0463413 (57 \#3365)}},
}

\bib{He73}{article}{
   author={Henkin, G.M.},
   title={An analytic polyhedron is not holomorphically equivalent to a
   strictly pseudoconvex domain},
   language={Russian},
   journal={Dokl. Akad. Nauk SSSR},
   volume={210},
   date={1973},
   pages={1026--1029},
   review={\MR{0328125}},
}

\bib{KS58}{article}{
   author={Kodaira, K.},
   author={Spencer, D.C.},
   title={On deformations of complex analytic structures. I, II},
   journal={Ann. of Math. (2)},
   volume={67},
   date={1958},
   pages={328--466},
   review={\MR{0112154}},
   doi={\doi{10.2307/1970009}},
}

\bib{Le81}{article}{
   author={Lempert, L.},
   title={La m\'etrique de Kobayashi et la repr\'esentation des domaines sur la
   boule},
   language={French, with English summary},
   journal={Bull. Soc. Math. France},
   volume={109},
   date={1981},
   number={4},
   pages={427--474},
   review={\MR{660145}},
}

\bib{Ma71}{article}
{author={Margulis, G.A.},
title={Abstracts of papers presented at the All-Union Conference on the Theory of Functions of a Complex Variable (Entire and Meromorphic Functions and Functions of Several Variables)},
note={Kharkov, 1971 (Russian), pp. 137--138, Fiziko-Tehn. Inst. Nizkih Temperatur Akad. Nauk Ukrain. SSR Kharkov},
date={1971},
}

\bib{NN57}
{article}{
   author={Newlander, A.},
   author={Nirenberg, L.},
   title={Complex analytic coordinates in almost complex manifolds},
   journal={Ann. of Math. (2)},
   volume={65},
   date={1957},
   pages={391--404},
   review={\MR{0088770}},
}

\bib{NW63}
{article}{
   author={Nijenhuis, A.},
   author={Woolf, W.B.},
   title={Some integration problems in almost-complex and complex manifolds},
   journal={Ann. of Math. (2)},
   volume={77},
   date={1963},
   pages={424--489},
   review={\MR{0149505}},
}

\bib{Ni57}
{article}{
   author={Nirenberg, L.},
   title={A complex Frobenius theorem},
   journal={ Seminars on Analytic Functions I},
   date={1957},
note={Institute for Advanced Study, Princeton},
page={172-189},
}

\bib{Pi}{article}{
   author={Pinchuk, S.},
   title={The scaling method and holomorphic mappings},
   conference={
      title={Several complex variables and complex geometry, Part 1},
      address={Santa Cruz, CA},
      date={1989},
   },
   book={
      series={Proc. Sympos. Pure Math.},
      volume={52},
      publisher={Amer. Math. Soc., Providence, RI},
   },
   date={1991},
   pages={151--161},
   review={\MR{1128522}},
}

\bib{Ts59}{book}{
   author={Tsuji, M.},
   title={Potential theory in modern function theory},
   publisher={Maruzen Co., Ltd., Tokyo},
   date={1959},
   review={\MR{0114894}},
}

\bib{Vo73}{article}{
   author={Vormoor, N.},
   title={Topologische Fortsetzung biholomorpher Funktionen auf dem Rande
   bei beschr\"ankten streng-pseudokonvexen Gebieten im ${\bf C}^{n}$ mit
   $C^{\infty }$-Rand},
   language={German},
   journal={Math. Ann.},
   volume={204},
   date={1973},
   pages={239--261},
   review={\MR{0367298}},
   doi={\doi{10.1007/BF01351592}},
}

\end{biblist}
\end{bibdiv}

\end{document}